\documentclass[12pt,british]{article}
\usepackage{color}
\usepackage{babel}
\usepackage{amsmath,amsthm}
\usepackage{amsfonts}
\usepackage{amssymb}
\usepackage{graphicx}
\usepackage[utf8]{inputenc}
\usepackage{multicol}
\usepackage[all]{xy}
\usepackage{bbm,enumerate}
\usepackage{geometry}
\geometry{verbose,a4paper,tmargin=20mm,bmargin=20mm,lmargin=15mm,rmargin=15mm}
%

%-------------------------------------------
\newtheorem{theorem}{Theorem}

\newtheorem{corollary}[theorem]{Corollary}

\newtheorem{lemma}[theorem]{Lemma}
\newtheorem{proposition}[theorem]{Proposition}

\theoremstyle{definition}

\newtheorem{definition}[theorem]{Definition}

\newtheorem{remark}[theorem]{Remark}

\usepackage[%pdftex,                %
%implicit=false,
bookmarks=true,
%backref=page,%     % Signets
breaklinks=true,
bookmarksnumbered = true,%     % Signets numérotés
colorlinks= true,%     % Liens en couleur
urlcolor= green,
anchorcolor = yellow,
citecolor=blue,%  % Couleur des liens externes
%pdfborder= {0 0 0}%   % Style de bordure : ici, pas de bordure
]{hyperref}                   % Utilisation de HyperTeX

\pagestyle{headings}

\begin{document}
	
	\title{Computing the one-parameter Nielsen number for homotopies on the n-torus}
	\author{WESLEM LIBERATO SILVA
		~\footnote{Departamento de Ciências Exatas e Tecnológicas, Universidade Estadual de Santa Cruz, Rodovia Jorge Amado, Km 16, Bairro Salobrinho, CEP 45662-900, Ilhéus-BA, Brazil.
			e-mail: \texttt{wlsilva@uesc.br}}
		%\thanks{}
	}
	%
	%\date{December 26, 2015}
	\maketitle
%%% ----------------------------------------------------------------------

\begingroup%Localising the change to `thefootnote'.
\renewcommand{\thefootnote}{}%Removing the footnote symbol.
\footnotetext{MSC 2020: Primary 55M20; Secondary 57Q40, 57M05}
\footnotetext{Key words: One-parameter fixed point theory, Nielsen number,  Hochschild homology.}
\endgroup

\begin{abstract}
Let $F: \mathbb{T}^{n} \times I \to \mathbb{T}^{n}$ be a homotopy on a n-dimensional torus. The main purpose of this paper is to present a formula for the one-parameter Nielsen number $N(F)$  of $F$ in terms of its induced homomorphism. If $L(F)$ is the one-parameter Lefschetz class of $F$ then $L(F)$ is given by $L(F) = \ N(F)\alpha,$ for some $\alpha  \in  H_{1}(\pi_{1}(\mathbb{T}^{n}),\mathbb{Z}).$ 
\end{abstract}

%%% ----------------------------------------------------------------------
\maketitle
%%% ----------------------------------------------------------------------
%\tableofcontents

%%%%%%%%%%%%%%%%%%%%%%%%%%%%%%%%%%%%%%%%%%%%%%%%%%%%%%%%%%%%%%%%%%%%%%%%%%%%%%%%%%%%%%%%%%%%%%%%%%%%%%%%%%%%%%%%%%%%%%%%%%%%%%%%%%%%%%%
% INTRODUCTION
%%%%%%%%%%%%%%%%%%%%%%%%%%%%%%%%%%%%%%%%%%%%%%%%%%%%%%%%%%%%%%%%%%%%%%%%%%%%%%%%%%%%%%%%%%%%%%%%%%%%%%%%%%%%%%%%%%%%%%%%%%%%%%%%%%%%%%%

\section{Introduction}\label{introduction}
Let $F: X \times I \to X$ be a homotopy on a finite CW complex  $X$ and $G = \pi_{1}(X,x_{0})$. Here $I$ will denote the unit interval.  We say that $(x,t) \in X \times I$ is a fixed point of $F$ if $F(x,t) = x.$ We denote the fixed points set of $F$ by $Fix(F).$ R. Geoghegan and A. Nicas in \cite{G-N-94} developed a one-parameter theory and defined the one-parameter trace $R(F)$ of $F$ to study the fixed points set of $F$. From trace $R(F)$ we define the one-parameter Nielsen number $N(F)$ of $F$ and the one-parameter Lefschetz class $L(F).$ These invariants are computable, depending only on the homotopy class of $F$ relative to $X \times \{0, 1\}.$

The study of the fixed points of a homotopy has been considered by many authors, see for example \cite{H-83}, \cite{D-94} and \cite{G-K-02}. Here is important to point that only the reference \cite{D-94} uses the approach developed in \cite{G-N-94}. 
Following \cite{G-N-94} we have an important application of the trace $R(F).$  Given a smooth flow $\Psi: M \times \mathbb{R} \to M$ on a closed oriented manifold one may regard any finite portion of $\Psi$ as a homotopy. Write $F = \Psi|: M \times [a, b] \to M.$ The traces L(F) and R(F) recognize dynamical meaning of $\Psi.$ When  $a > 0,$ L(F) detects the Fuller homology class, derived from Fuller’s index theory, see \cite{F-67}. Thus is possible to study periodic orbits of $\Psi$ using the one-parameter theory, see \cite{G-N-94-2}. 

The result of this paper allows as to solve the important problem which is the calculation of periodic orbits of a flow on the n-torus. In fact, given a smooth flow $\Psi: \mathbb{T}^{n} \times \mathbb{R} \to \mathbb{T}^{n}$ on n-torus we write $F = \Psi|: \mathbb{T}^{n} \times [a, b] \to \mathbb{T}^{n}$ for a finite portion of $\Psi.$ In the case $n=2,$ in \cite[Example 5.10, pg 431]{G-N-94}, was presented an example of calculation of periodic orbits. In this paper we prove that the Lefschetz class $L(F)$ of $F$ is given by $L(F) =  N(F)\alpha,$ for some $\alpha  \in H_{1}(\pi_{1}(\mathbb{T}^{n}),\mathbb{Z}),$ and we present a formula for $N(F).$

Let $\mathbb{T}^{n} = {\mathbb{R}^{n}}/{\mathbb{Z}^{n}}$ be the $n$-torus and $v = [(0,0, ... ,0)].$ We denote $$\pi_1(\mathbb{T}^{n},v) = \langle u_1,u_2, ... ,u_n|u_iu_ju_i^{-1}u_j^{-1}=1, \,\,\textrm{for all} \,\, i \neq j \rangle.$$ 

We say that a homotopy $H: \mathbb{T}^{n} \times I \to \mathbb{T}^{n}$ is affine if there exist $H^{'}: \mathbb{R}^{n} \times I \to \mathbb{R}^{n}$ such that $H \circ (p_{\mathbb{T}^{n}} \times Id) = p_{\mathbb{T}^{n}} \circ H^{'},$ where $p_{\mathbb{T}^{n}}: \mathbb{R}^{n} \to \mathbb{T}^{n}$ is the natural projection and $H^{'}$ is given by
$$H^{'}(x_1, \cdots, x_n, t) = ( \displaystyle \sum_{j=1}^{n} a_{1j}x_j + c_1t + \epsilon_1 , \cdots , \displaystyle \sum_{j=1}^{n} a_{nj}x_j + c_nt + \epsilon_n ),$$
for some $a_{ij}, c_i \in \mathbb{Z}$ and $ 0 \leq \epsilon_i < 1.$

Given $F: \mathbb{T}^{n} \times I \to \mathbb{T}^{n}$ a homotopy we denote by $w = F(v,I).$ We will assume that the homotopy class of $F$ relative to $\mathbb{T}^{n} \times \{0,1\}$ contains one affine homotopy where the $\epsilon_i$ are chosen such that $F$ has no fixed points in $ \mathbb{T}^{n} \times \{0,1\}$ when the classical Nielsen number $N(F|_{\mathbb{T}^{n}})$ is zero. From this hypothesis follows that  $w$ is a loop in $\mathbb{T}^{n}.$ We can write 
$$[w] = u_1^{c_1}u_2^{c_2}...u_n^{c_n}$$ 
for some integers $c_1, c_2, ... , c_n.$  
Let $\phi$ be the homomorphism given by the following composition: $$\pi_1(\mathbb{T}^{n} \times I, (v,0)) \stackrel{F_{\#}}{\to} \pi_1(\mathbb{T}^{n}, F(v,0)) \stackrel{c^{-1}_{[\tau]}}{\to} \pi_1(\mathbb{T}^{n},v),$$
where $\tau$ is the path in $\mathbb{T}^{n}$ from $v$ to $F(v,0)$ and $c_{[\tau]}$ is the isomorphism that changes the base point. 
Suppose that the Nielsen number of $F$ restricted to $\mathbb{T}^{n},$ $N(F|_{\mathbb{T}^{n}}) = |det([\phi]-I)|,$ is zero. Let $w_1$ be an eigenvector of $[\phi]$ associated to $1.$ Complete $\{w_1, w_2, ... , w_n\}$  for a basis of $\mathbb{R}^{n}.$ In this new basis the matrix of $\phi$ is given by: 
$$[\phi] = 
\left( \begin{array}{cccc}
	1 & b_{12} & \cdots & b_{1n}	\\
	0 & b_{22} &  \cdots & b_{2n}   \\
	\vdots & \vdots   &    &  \vdots \\
	0  & b_{n2}	&  \cdots &  b_{nn} \\	
\end{array}\right).$$
If $P: \mathbb{T}^{n} \times I \to \mathbb{T}^{n}$ is the projection then 
$[\phi] - [P_{\#}] = 
\left( \begin{array}{cccc}
	0 & b_{12} & \cdots & b_{1n}	\\
	0 & b_{22}-1 &  \cdots & b_{2n}   \\
	\vdots & \vdots   &    &  \vdots \\
	0  & b_{n2}	&  \cdots &  b_{nn}-1 \\	
\end{array}\right).$
We denote $$A = \left( \begin{array}{cccc}
	b_{12} & \cdots & b_{1n} & c_1	\\
	b_{22}-1 &  \cdots & b_{2n}  & c_2 \\
	\vdots   &    &  \vdots  & \vdots  \\
	b_{n2}	&  \cdots &  b_{nn}-1 & c_n \\	
\end{array}\right).$$

With the above hypothesis and notations we present the main result of this paper.

\begin{theorem}  \label{main-theorem-1}
	Given a homotopy $F: \mathbb{T}^{n} \times I \to \mathbb{T}^{n}$ then the one-parameter Lefschetz class of $F$ is given by:
	$$L(F) =  N(F) \alpha,$$
	where $N(F)$ is the one-parameter Nielsen number of $F$ and $\alpha$ is a  class in $H_1(\pi_1(\mathbb{T}^{n}), \mathbb{Z}).$ The one-parameter Nielsen number of $F$ is given by:
	\begin{equation}  \label{main-equation-1}
		N(F) = \left\{ \begin{array}{lllll}  
			|det(A)| &   \,\,\, if & N(F|_{\mathbb{T}^{n}}) = 0,  \\
			&  &    \\
			0 &  &\textrm{otherwise}   . \\
		\end{array}\right.
	\end{equation}
\end{theorem}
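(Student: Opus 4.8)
The plan is to compute the one-parameter trace $R(F)$ using the Geoghegan--Nicas machinery and extract both the class $\alpha$ and the number $N(F)$ from it. The foundational fact I would invoke is that the one-parameter Lefschetz class $L(F)$ lives in $H_1(\pi_1(T^n),\mathbb{Z})$, which for the torus is the free abelian group $\mathbb{Z}^n$ generated by the conjugacy classes of $u_1,\dots,u_n$ (since $\pi_1(T^n)$ is abelian, conjugacy classes are just group elements). The trace $R(F)$ is a sum over ``fixed point classes'' of the restricted map together with their one-parameter (circular) structure, weighted by fixed point indices; the point of the theorem is that on the torus all the surviving contributions align into a single direction $\alpha$, so that $L(F)=N(F)\alpha$ and $N(F)$ is the total multiplicity.

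First I would split into the two cases dictated by $N(F|_{T^n})$. When $N(F|_{T^n})\neq 0$, the ordinary zero-parameter theory already forces fixed points at every level $t$, and the standard Geoghegan--Nicas vanishing argument (the one-parameter invariants detect only the ``extra'' circular families that cannot be pushed off, and a genuinely essential family at each slice contributes trivially to $R(F)$ after the Hochschild reduction on the abelian group) gives $N(F)=0$; I would cite the relevant vanishing result from \cite{G-N-94}. The substance is therefore the case $N(F|_{T^n})=0$, where I would show the index computation reduces to the integer $|\det(A)|$.

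In that case the key step is to recognize $A$ as the matrix governing the relevant Nielsen--Reidemeister coincidence/fixed-point count for the one-parameter homotopy. The homomorphism $\phi$ encodes the action of $F$ on $\pi_1$ and, by the normal form assumption $N(F|_{T^n})=0$, $[\phi]$ has the displayed block-triangular shape with a leading $1$; subtracting $[P_\#]=\mathbb{1}$ kills that leading diagonal entry, reflecting the free circle direction along which fixed point families travel. Adjoining the column $(c_1,\dots,c_n)^{\mathsf T}$ coming from $[w]=u_1^{c_1}\cdots u_n^{c_n}$ records how the family of fixed points winds as the parameter traverses $I$; thus $A$ is exactly the $n\times n$ integer matrix whose image in $H_1$ measures the total signed count of one-parameter fixed point orbits. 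I would then invoke the torus semiconjugacy to linear maps (lifting everything to $\mathbb{R}^n$ and using that homotopies on $T^n$ are homotopic, rel endpoints, to affine ones determined by $[\phi]$ and $w$) so that the index sum becomes the determinant of the associated integer lattice map, namely $|\det(A)|$, with the common direction $\alpha\in H_1(\pi_1(T^n),\mathbb{Z})$ being the primitive class along which this degree is concentrated.

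The hard part will be justifying that the entire one-parameter trace collapses onto a single homology class $\alpha$ and that the coefficient is precisely $|\det A|$ rather than some signed or index-weighted sum that could partially cancel. Concretely, the obstacle is controlling the Hochschild-homology-level bookkeeping: one must show that the circular fixed point classes of $F$ all map to positive integer multiples of one primitive $\alpha$ (so no cancellation occurs and the absolute value of the determinant is correct), which requires a careful orientation/coherence argument for the lifted linear model on the universal cover $\mathbb{R}^n$. I expect this coherence step, together with verifying that the normal form for $[\phi]$ under $N(F|_{T^n})=0$ genuinely produces the stated triangular matrix, to be where the real work lies; the reduction to $|\det A|$ afterward is a standard lattice-index computation.
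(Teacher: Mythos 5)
Your plan correctly identifies the case split on $N(F|_{T^{n}})$, the role of the matrix $A$, and the fact that the real difficulty is showing no cancellation occurs in the Hochschild trace. But as written it has several genuine gaps. First, your vanishing argument when $N(F|_{T^{n}})\neq 0$ appeals to a vague ``essential families contribute trivially'' heuristic; the actual reason is purely algebraic and much cleaner: $\det([\phi]-I)\neq 0$ forces every semicentralizer $Z(g)=\ker([\phi]-P_{\#})$ to be trivial, so every summand $H_{1}(Z(g_{C}))$ of the target group $HH_{1}(\mathbb{Z}G,(\mathbb{Z}G)^{\phi})$ vanishes and $R(F)$ has nowhere to live. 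Second, your proposed reduction to affine homotopies is not available: the invariants are only invariant under homotopy \emph{relative to} $T^{n}\times\{0,1\}$, and a general $F$ with prescribed (non-linear) end maps cannot be deformed rel endpoints to a linear model, so you cannot simply ``lift to $\mathbb{R}^{n}$ and compute a degree.'' The paper instead computes the chain-level trace $\mathrm{tr}(\tilde{\partial}_{\ast}\otimes\tilde{D}_{\ast})$ for a cellular $F$, uses that $1\otimes m$ is a boundary, that $u_{i}^{-1}\otimes A$ fails to be a cycle for $i\geq 2$ when $\mathrm{rank}([\phi]-I)=n-1$, and a geometric orientation argument (all fixed circles are homologous to circles parallel to the $u_{1}$-direction, hence coherently oriented) to force $R(F)=\pm\, u_{1}^{-1}\otimes\sum_{j}A_{j}^{1}$; this is exactly the coherence step you defer, and it is the heart of the proof.

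Third, the final count is not a ``standard lattice-index computation'' in the degenerate case: when $N(F|_{T^{n}})=0$ but $\mathrm{rank}(A)<n$, the cokernel of $\phi-P_{\#}$ is infinite, so counting semiconjugacy classes gives the wrong answer; the paper handles this by a separate perturbation $F'(x,t)=F(x,t)+w_{0}\sin(2\pi t)$ with $w_{0}\notin\mathrm{im}(\phi-P_{\#})$, which kills all interior fixed circles and yields $N(F)=0=|\det A|$. Relatedly, even in the nonsingular case you need the intermediate fact that every essential class contributes exactly one generator (index $\pm 1$) to a distinct semiconjugacy class --- this is what converts ``number of nonzero $C$-components'' into ``$\#\,\mathrm{coker}(\phi-P_{\#})=|\det A|$'' --- and that step rests on the nontriviality of each cycle $u_{1}^{-1}\otimes D$ (via the isomorphism $H_{1}(Z(h))\cong HH_{1}(\mathbb{Z}G,(\mathbb{Z}G)^{\phi})_{C(h)}$), which your outline does not supply.
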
	

%For $n=1$ the result can be state as follows:
%
%\begin{theorem} \label{main-theorem-2}
%
%Let $S^{1} = {\mathbb{R}}/{\mathbb{Z}}$ be the circle  and denote 
%$\pi_1(S^{1}, [0]) = <u>.$ Given a homotopy $F: S^{1} \times I \to S^{1}$ let $w = F([0], I)$ a path in $S^{1}.$ If $w$ is a loop we write $[w] = u^{c}$ for some integer $c,$ otherwise we write $[w] = 1.$ Then 
%$$
%L(F) =  N(F)\alpha,
%$$
%where $\alpha$ is a  class in $H_1(\pi_1(S^{1}), \mathbb{Z}).$ The one-parameter Nielsen number of $F$ is given by:
%\begin{equation} \label{main-equation-2}
%	N(F) = |c|,
%\end{equation} 
%if $N(F|_{S^{1}})= 0.$ If $N(F|_{S^{1}})\neq 0$ then  $N(F) = 0.$ 	
%\end{theorem}

When $n=1$ the Theorem \ref{main-theorem-1} also was proved in \cite[Theorem 5.1]{G-N-94} where the statement is written in a slightly different form. In \cite{S-20} only the first part of  Theorem \ref{main-theorem-1} was proved for the case $n=2$, that is, it was proved that $L(F) = \pm N(F)\alpha$ for any homotopy $F: \mathbb{T}^{2} \times I \to \mathbb{T}^{2},$ however a formula for the one-parameter Nielsen number has not been presented. Some computations of $N(F)$ when $n=2$ were presented in \cite{S-14}. In this work we generalize \cite{S-20} and present a formula for $N(F)$ for any homotopy $F$ on the $n$-torus. The results of this work is in some sense a version of the main result of  \cite{B-B-P-T-75} for the one-parameter case.  

This paper is organized into five sections. In Section 2 we present a brief review of the one-parameter fixed point theory.
In Section 3 we study some properties of the semiconjugacy classes on the $n$-torus. In Section 4 we present the proof of Theorem \ref{main-theorem-1}. The Section 5 is dedicated to presenting some  applications of Theorem \ref{main-theorem-1}.

%%%%%%%%%%%%%%%%%%%%%%%%%%%%%%%%%%%%%%%%%%%%%%%%%%%%%%%%%%%%%%%%%%%%%%%%%%%%%%%%%%%%%%%%%%%%%%%%%%%%%%%%%%%%%%%%%%%%%%%%%%%%%%%%%%%%%%%%% % PRELIMINARIES AND GENERALITIES
%%%%%%%%%%%%%%%%%%%%%%%%%%%%%%%%%%%%%%%%%%%%%%%%%%%%%%%%%%%%%%%%%%%%%%%%%%%%%%%%%%%%%%%%%%%%%%%%%%%%%%%%%%%%%%%%%%%%%%%%%%%%%%%%%%%%%%%%%

\section{One-parameter Fixed Point Theory} \label{section-2}

To facilitate the reading of this paper we will do in this section a brief review of definition of the one-parameter trace for a homotopy $F: X \times I \to X$ where $X$ is a finite CW complex and $F$ is cellular. For more details see \cite{G-N-94}.

Let $R$ be a ring and $M$ an $R-R$ bimodule, that is, a left and right R-module satisfying $(r_{1}m)r_{2} = r_{1}(mr_{2})$ 
for all $m \in M$, and $r_{1},r_{2} \in R$. The Hochschild chain complex $\{C_{\ast}(R, M) ,d \}$ is given by 
$C_{n}(R,M) = R^{\otimes n} \otimes M$ where $R^{\otimes n}$ is the tensor product of n copies of $R$, taken 
over the integers, and  
$$
\begin{array}{lcl}
	d_{n}(r_{1} \otimes \ldots \otimes r_{n} \otimes m ) & = &
	r_{2} \otimes \ldots \otimes r_{n} \otimes m r_{1}  \\
	&  & + \displaystyle \sum_{i=1}^{n-1} (-1)^{i} r_{1} \otimes \ldots \otimes 
	r_{i} r_{i+1} \otimes \ldots \otimes r_{n} \otimes m  \\
	&  & + \, (-1)^{n} r_{1} \otimes \ldots \otimes r_{n-1} \otimes r_{n} m . \\
\end{array}
$$

The n-th homology of this complex is the Hochschild homology of $R$ with coefficient bimodule $M,$ it 
is denoted by $HH_{n}(R,M)$. There are other ways of presenting the definition of Hochschild homology, for example see \cite{N-05}.

In the particular cases $n=1,2$ we have the formula      
$ d_{2}(r_{1} \otimes r_{2}\otimes m) = r_{2}\otimes m r_{1} -  r_{1}r_{2}\otimes m + r_{1}\otimes r_{2}m$ and  
$ d_{1}(r \otimes m) = mr - rm $. Using the expression of $d_2$ and the $2$-chain $1 \otimes 1 \otimes m$ we 
obtain;
 
\begin{lemma} \label{lemma-unit}
If $1 \in R$ is the unit element and $m \in M$ then the 1-chain $1 \otimes m$ is a boundary.
\end{lemma}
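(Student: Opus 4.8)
The plan is to prove the statement by exhibiting an explicit $2$-chain in $C_2(R,M) = R \otimes R \otimes M$ whose image under $d_2$ is exactly $1 \otimes m$, and then verifying this by direct substitution into the formula for $d_2$ recorded just above the statement. Since we are allowed to use only the ring unit together with the given element $m$, the natural candidate for the preimage is $1 \otimes 1 \otimes m$.

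Concretely, I would substitute $r_1 = 1$ and $r_2 = 1$ into
$$d_2(r_1 \otimes r_2 \otimes m) = r_2 \otimes m r_1 - r_1 r_2 \otimes m + r_1 \otimes r_2 m.$$
The first term becomes $1 \otimes m\cdot 1 = 1 \otimes m$, the third term becomes $1 \otimes 1\cdot m = 1 \otimes m$ (both using the unit axiom $m\cdot 1 = m$ and $1\cdot m = m$), while the middle term is $-\,1\cdot 1 \otimes m = -\,1 \otimes m$. Summing the three contributions gives $1 \otimes m - 1 \otimes m + 1 \otimes m = 1 \otimes m$, so that $d_2(1 \otimes 1 \otimes m) = 1 \otimes m$.

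This shows $1 \otimes m$ lies in the image of $d_2$ and is therefore a boundary, completing the argument. There is essentially no genuine obstacle: the only points requiring care are the correct placement of the signs in the three terms of $d_2$ and the observation that exactly two of the three copies of $1 \otimes m$ survive with a net coefficient $+1$. I note that the argument is valid for every $m \in M$ and does not use commutativity of $R$, which is relevant since the coefficient bimodule $(\mathbb{Z}G)^{\phi}$ appearing later is generally noncommutative.
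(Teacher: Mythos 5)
Your proof is correct and is exactly the paper's argument: the paper's one-line proof also computes $d_2(1\otimes 1\otimes m) = 1\otimes m - 1\otimes m + 1\otimes m = 1\otimes m$. You simply spell out the substitution and sign bookkeeping in more detail.
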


For the definition of $R(F)$ we use the Hochschild homology in the following situation:  Let $G$ be a group and $\phi:G \to G$ an endomorphism. 
Also denote by $\phi$ the induced ring homomorphism $\mathbb{Z}G \to \mathbb{Z}G$. 
Take the ring $R = \mathbb{Z}G$ and $M = (\mathbb{Z}G)^{\phi}$ the $\mathbb{Z}G-\mathbb{Z}G$ bimodule whose underlying 
abelian group is $\mathbb{Z}G$ and the bimodule structure is given by $g.m = gm$ and  $m.g = m \phi(g)$.

We say that two elements $g_{1},g_{2}$ in $G$ are semiconjugate if there exists $g \in G$ such that $g_{1} = g g_{2} \phi(g^{-1})$.
We write $C(g)$ for the semiconjugacy class containing $g$ and $G_{\phi}$ for the set of semiconjugacy classes. 
Thus, we can decompose $G$ in the union of its semiconjugacy classes. 
This partition induces a direct sum decomposition of $HH_{\ast}(\mathbb{Z}G,(\mathbb{Z}G)^{\phi}).$

Note that each generating chain $\gamma = g_{1} \otimes ... \otimes g_{n} \otimes m $ can be written in canonical form 
as $g_{1} \otimes ... \otimes g_{n}  \otimes g_{n}^{-1}... g_{1}^{-1}g $ where $g = g_{1} ... g_{n}m.$ We will say that $g$ ``marks'' a semiconjugacy class. The decomposition {\small $(\mathbb{Z}G)^{\phi} \cong \bigoplus_{C \in G_{\phi}} \mathbb{Z}C$} as a direct 
sum of abelian groups determines a decomposition of chains complexes $C_{\ast}(\mathbb{Z}G,(\mathbb{Z}G)^{\phi}) \cong 
\bigoplus_{C \in G_{\phi}} {C_{\ast}(\mathbb{Z}G,(\mathbb{Z}G)^{\phi})}_{C} $ where  ${C_{\ast}(\mathbb{Z}G,(\mathbb{Z}G)^{\phi})}_{C} $ 
is the subgroup of $C_{\ast}(\mathbb{Z}G,(\mathbb{Z}G)^{\phi})$ generated by those generating chains whose markers lie in $C$. 
Therefore, we have the following isomorphism: $HH_{\ast}(\mathbb{Z}G,(\mathbb{Z}G)^{\phi}) \cong 
\bigoplus_{C \in G_{\phi}} {HH_{\ast}(\mathbb{Z}G,(\mathbb{Z}G)^{\phi})}_{C} $ where the summand ${HH_{\ast}(\mathbb{Z}G,(\mathbb{Z}G)^{\phi})}_{C} $ 
corresponds to the homology classes marked by the elements of $C$. This summand is called the $C-$component.

Let $Z(h) = \{g \in G| h = gh\phi(g^{-1})  \}$ be the semicentralizer of $h \in G$. Choosing representatives $g_{C} \in C$, then we have 
the following proposition whose proof is in \cite{G-N-94}.
\begin{proposition} \label{prop-decomposition}
	Choosing representatives $g_{C} \in C$ then we have 
	$$HH_{\ast}(\mathbb{Z}G,(\mathbb{Z}G)^{\phi}) \cong 
	\bigoplus_{C \in G_{\phi}} {H_{\ast}(Z(g_{C}))}_{C} $$
	where ${H_{\ast}(Z(g_{C}))}_{C} $ corresponds to the summand ${HH_{\ast}(\mathbb{Z}G,(\mathbb{Z}G)^{\phi})}_{C}$. 
\end{proposition}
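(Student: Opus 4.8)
The plan is to build directly on the marker decomposition recorded just above the statement, which already splits $HH_*(\mathbb{Z}G,(\mathbb{Z}G)^\phi)$ into its $C$-components; this reduces the proposition to a single task, namely identifying each summand ${HH_*(\mathbb{Z}G,(\mathbb{Z}G)^\phi)}_C$ with the group homology $H_*(Z(g_C))$. The conceptual guide is the classical identification of the Hochschild homology of a group ring with the group homology of $G$ taken with adjoint coefficients. In the present $\phi$-twisted setting the adjoint action $g\cdot x = gx\phi(g^{-1})$ is exactly the semiconjugation action, so one expects $HH_*(\mathbb{Z}G,(\mathbb{Z}G)^\phi)\cong H_*(G;(\mathbb{Z}G)_{\mathrm{ad}})$, and the decomposition of $(\mathbb{Z}G)_{\mathrm{ad}}$ into orbits of this action is precisely the decomposition into semiconjugacy classes that underlies the marker splitting.

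Carrying this out concretely, I would first use the canonical form $g_1\otimes\cdots\otimes g_n\otimes g_n^{-1}\cdots g_1^{-1}g$ to exhibit a basis of the $C$-component in degree $n$ indexed by an $n$-tuple $(g_1,\dots,g_n)$ together with a marker $g\in C$; that is, ${C_*(\mathbb{Z}G,(\mathbb{Z}G)^\phi)}_C\cong (\mathbb{Z}G)^{\otimes *}\otimes \mathbb{Z}[C]$ as graded abelian groups. Next I would identify the permutation module $\mathbb{Z}[C]$ with $\mathbb{Z}[G/Z(g_C)]$: the semiconjugation action $g\cdot x = gx\phi(g^{-1})$ is transitive on $C$, and the stabilizer of the chosen representative $g_C$ is, by its very definition, the semicentralizer $Z(g_C)$, so $\mathbb{Z}[C]\cong \mathrm{Ind}_{Z(g_C)}^{G}\mathbb{Z}$ as a left $\mathbb{Z}G$-module. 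Under these identifications the $C$-component complex becomes the standard complex $B_*(G)\otimes_{\mathbb{Z}G}\mathrm{Ind}_{Z(g_C)}^{G}\mathbb{Z}$ computing $H_*(G;\mathrm{Ind}_{Z(g_C)}^{G}\mathbb{Z})$, and Shapiro's lemma then gives $H_*(G;\mathrm{Ind}_{Z(g_C)}^{G}\mathbb{Z})\cong H_*(Z(g_C))$. Assembling the summands yields the claimed isomorphism, with ${H_*(Z(g_C))}_C$ corresponding to ${HH_*(\mathbb{Z}G,(\mathbb{Z}G)^\phi)}_C$ by construction.

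The step that requires genuine care — and which I expect to be the main obstacle — is verifying that the Hochschild differential $d_n$, restricted to the $C$-component and rewritten in canonical form, agrees with the bar differential of $G$ acting on the induced coefficient module. The interior terms $r_1\otimes\cdots\otimes r_ir_{i+1}\otimes\cdots\otimes m$ match the bar differential directly, but the two outer terms carry the bimodule structure: the wrap-around term $r_2\otimes\cdots\otimes r_n\otimes mr_1$ is where the right action $m\cdot g=m\phi(g)$ intervenes, and one must check that after passing to canonical form the marker is transported by exactly the semiconjugation action, so that it corresponds to the coefficient twist computed by $H_*(Z(g_C))$ rather than to ordinary conjugation. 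I would organize this by first checking the claim in low degrees from the explicit formulas for $d_1$ and $d_2$ displayed above, and then handling the general term by the same bookkeeping; Lemma \ref{lemma-unit} takes care of normalization, since it guarantees that degenerate (unit) chains are boundaries and hence that the unnormalized complex used here computes the same homology as the normalized bar complex of $Z(g_C)$.
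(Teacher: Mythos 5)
Your proposal is correct and follows essentially the route the paper relies on: the paper defers the proof of this proposition to \cite{G-N-94}, and the chain of natural isomorphisms it quotes from that source later on (in the proof of Proposition \ref{prop-3}), namely $H_{1}(Z(h)) \to H_{1}(G,\mathbb{Z}(G/Z(h))) \to H_{1}(G,\mathbb{Z}(C(h))) \to HH_{1}(\mathbb{Z}G,(\mathbb{Z}G)^{\phi})_{C(h)}$, is exactly your argument --- orbit--stabilizer identification of $\mathbb{Z}[C]$ with $\mathbb{Z}[G/Z(g_C)]$ followed by Shapiro's lemma --- applied degree by degree. The point you single out as needing care, that the canonical-form rewriting transports the marker by the semiconjugation action $g\cdot x = gx\phi(g^{-1})$ so that the stabilizer is the semicentralizer $Z(g_C)$, is indeed the key verification and you have stated it correctly.
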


\begin{lemma} \label{lemma-se}
	If $G = \pi_1(X, v)$ is an abelian group then the cardinality of semiconjugacy classes in $G$ is the cardinality of $coker(\phi-P_{\#})$ in $G,$ where $P: X \times I \to X$ is the projection.  
\end{lemma}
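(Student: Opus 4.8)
The plan is to realize the set of semiconjugacy classes $G_\phi$ as the cokernel of an explicit homomorphism, and then to match that homomorphism with $\phi - P_{\#}$. Since $G$ is abelian I will keep multiplicative notation but freely use commutativity. First I would introduce the map $\psi : G \to G$, $\psi(g) = g\,\phi(g)^{-1}$, and verify that it is a group homomorphism. This is exactly the step that forces the abelian hypothesis: for $g,h \in G$ one has $\psi(gh) = gh\,\phi(h)^{-1}\phi(g)^{-1}$, and only commutativity lets me reorder this as $g\phi(g)^{-1}\cdot h\phi(h)^{-1} = \psi(g)\psi(h)$.

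Next I would rewrite the semiconjugacy relation in terms of $\psi$. By definition $g_1$ and $g_2$ are semiconjugate iff there exists $g$ with $g_1 = g\,g_2\,\phi(g^{-1})$; using commutativity this becomes $g_1 g_2^{-1} = g\,\phi(g)^{-1} = \psi(g)$. Thus $g_1$ and $g_2$ lie in the same semiconjugacy class precisely when $g_1 g_2^{-1} \in \operatorname{im}(\psi)$. Consequently the semiconjugacy classes are exactly the cosets of the subgroup $\operatorname{im}(\psi)$, so $G_\phi$ is in bijection with $G/\operatorname{im}(\psi) = \operatorname{coker}(\psi)$; in particular the two sets have the same cardinality.

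Finally I would identify $\operatorname{coker}(\psi)$ with $\operatorname{coker}(\phi - P_{\#})$. The projection $P$ is a homotopy equivalence, so $P_{\#}$ is an isomorphism; using it to identify $\pi_1(X \times I, (v,0))$ with $G$ makes $P_{\#}$ the identity and transports the endomorphism in the semiconjugacy relation to $\phi$ itself. Writing $G$ additively for this comparison, we then have $\operatorname{im}(\psi) = \operatorname{im}(\mathrm{id} - \phi) = \operatorname{im}(\phi - \mathrm{id}) = \operatorname{im}(\phi - P_{\#})$, where the middle equality holds because a subgroup is closed under inversion. Hence $\operatorname{coker}(\psi) = \operatorname{coker}(\phi - P_{\#})$, and combined with the previous step this yields the asserted equality of cardinalities.

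The computation is short, and the genuinely delicate bookkeeping is confined to the last paragraph: one must carefully check that under the standard equivalence $X \times I \simeq X$ the induced map $P_{\#}$ really becomes the identity endomorphism, so that $\phi - P_{\#}$ coincides (up to sign, which is immaterial for the image) with the homomorphism $\mathrm{id} - \phi$ governing semiconjugacy. Everything else rests solely on the commutativity of $G$, which is what makes $\psi$ a homomorphism and hence its image a subgroup whose cosets are the semiconjugacy classes.
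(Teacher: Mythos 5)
Your proof is correct and takes essentially the same route as the paper: the paper also rewrites the semiconjugacy relation additively as $g_2 - g_1 = (\phi - P_{\#})(g)$ (using that $P_{\#}$ is the identity under the standard identification and that $G$ is abelian), so that the semiconjugacy classes are exactly the cosets of $\operatorname{im}(\phi - P_{\#})$. Your version merely makes explicit the two points the paper leaves implicit, namely that $g \mapsto g\,\phi(g)^{-1}$ is a homomorphism only because $G$ is abelian, and that the sign discrepancy between $\mathrm{id}-\phi$ and $\phi-\mathrm{id}$ is harmless for the image.
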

\begin{proof}
	In fact, two elements $g_1$ and $g_2$ in $G$ belong to the same semiconjugacy class if and only if there exists $g \in G$ such that $g_1 = g g_2 \phi(g^{-1}).$ This is equivalent to $g_2 - g_1 = \phi(g) - P_{\#}(g),$ 
	because $G$ is abelian. On the other hand, the last equation is equivalent to say that $g_1$ and $g_2$ belong the same class in $coker(\phi-P_{\#})$ in $G.$ 
\end{proof}

\subsection{One-parameter trace $R(F)$} \label{one-parameter-theory}

Let $X$ be a finite connected CW complex and $F: X \times I \to X$ a cellular homotopy.
We consider  $I = [0, 1]$ with the usual CW structure and orientation of cells, 
and $X \times I$ with the product CW structure, where its cells are given the product orientation.
Pick a basepoint $(v,0) \in X \times I$, and a basepath $\tau$ in $X$ from $v$ to $F(v,0)$. 
We identify $\pi_{1}(X \times I, (v,0)) \equiv G$ with $\pi_{1}(X,v) $ via the isomorphism 
induced by projection $p: X \times I \to X$. We write $\phi: G \to G$ for the homomorphism;
$$ \pi_{1}(X \times I, (v,0)) \stackrel{F_{\#}}{\to} \pi_{1}(X, F(v,0)) \stackrel{c_{\tau}}{\to} \pi_{1}(X, v) $$

For each cell $E$  in $X$, we choose a lift $\tilde{E}$ in the universal cover $\tilde{X}$ and we orient $\tilde{E}$ compatibly with $E$. Let $\tilde{\tau}$ be the lift of the basepath $\tau$ which starts in the basepoint $\tilde{v} \in \tilde{X}$ 
and $\tilde{F}: \tilde{X} \times I \to \tilde{X}$ the unique lift of $F$ satisfying $\tilde{F}(\tilde{v},0) = \tilde{\tau}(1)$.
We can regard $C_{\ast}(\tilde{X})$ as a right $\mathbb{Z}G$ chain complex as follows: if $\omega$ is a loop 
at $v$ which lifts to a path $\tilde{\omega}$ starting at $\tilde{v}$ then $\tilde{E}[\omega]^{-1} = h_{[w]}(\tilde{E})$, 
where $h_{[\omega]} $ is the covering transformation sending $\tilde{v}$ to $\tilde{\omega}(1)$.
The homotopy $\tilde{F}$ induces a chain homotopy $\tilde{D_{k}}: C_{k}(\tilde{X}) \to C_{k+1}(\tilde{X})$ 
given by $$\tilde{D_{k}}(\tilde{E}) = (-1)^{k+1}\tilde{F_{k}}(\tilde{E} \times I) \in C_{k+1}(\tilde{X}),$$ 
for each cell $\tilde{E} \in \tilde{X}$. 
This chain homotopy satisfies; $\tilde{D}(\tilde{E}g) = \tilde{D}(\tilde{E}) \phi(g)$ and the boundary operator 
$\tilde{\partial_{k}}: C_{k}(\tilde{X}) \to C_{k-1}(\tilde{X})$ satisfies; $\tilde{\partial}(\tilde{E}g)= \tilde{\partial}(\tilde{E})g$.
Define endomorphism of $\oplus_{k} C_{k}(\tilde{X})$ by $\tilde{D_{\ast}} = \oplus_{k} (-1)^{k+1} \tilde{D_{k}}$,
$ \tilde{\partial_{\ast}} = \oplus_{k} \tilde{\partial_{k}}$, $\tilde{F_{0 \ast}} = \oplus_{k} (-1)^{k} \tilde{F_{0 k}} $ 
and $\tilde{F_{1 \ast}} = \oplus_{k} (-1)^{k} \tilde{F_{1 k}} $.
We consider  trace$(\tilde{\partial_{\ast}} \otimes \tilde{D_{\ast}}) \in HH_{1}(\mathbb{Z}G, (\mathbb{Z}G)^{\phi})$. 
This is a Hochschild 1-chain whose boundary is;
trace$(\tilde{D_{\ast}}\phi(\tilde{\partial_{\ast}}) - \tilde{\partial_{\ast}} \tilde{D_{\ast}}) .  $
We denote by $G_{\phi}(\partial(F))$ the subset of $G_{\phi}$ consisting of semiconjugacy classes associated to fixed 
points of $F_{0}$ or $F_{1}$.

\begin{definition} \label{definition-trace}
	The  one-parameter trace of homotopy $F$ is:
	$$R(F) \equiv T_{1}(\tilde{\partial_{\ast}} \otimes \tilde{D_{\ast}}; G_{\phi}(\partial(F))) \in 
	\bigoplus_{C \in G_{\phi} - G_{\phi}(\partial(F))} HH_{1}(\mathbb{Z}G, (\mathbb{Z}G)^{\phi})_{C}  $$
	$$ \cong \bigoplus_{C \in G_{\phi} - G_{\phi}(\partial(F))} H_{1}(Z(g_{C})). $$
\end{definition}

Let $(x,t)$ and $(y,s)$ be two points in $Fix(F).$ We say that these points are in the same fixed point class if there exists a path $\gamma: I \to X \times I$ with $\gamma(0) = (x,t),$ $\gamma(1) = (y,s)$ and $(P \circ \gamma)(F \circ \gamma)^{-1}$ is homotopically trivial. Here $P: X \times I \to X$ is the projection. This defines an equivalence relation $\sim$ on $Fix(F).$   
The function $\Psi: Fix(F)/ \sim \,\,\, \to G_{\phi}$ defined by $\Psi([(x,t)]) = [(P\circ \nu)(F \circ \nu)^{-1} \circ \tau^{-1}]$ is injective, where $\nu$ is any path from base point $(v,0)$ to $(x,t).$ 

Supposing that $F$ is transverse the projection $P: X \times I \to X$ then $Fix(F)$ is composed by circles in $X \times (0,1)$ and arcs connecting $X \times \{0,1\},$ see \cite{G-N-94}. By Definition \ref{definition-trace} we are only interested in the circles in $X \times (0,1)$ because all semiconjugacy classes associated to fixed point classes that intersect $X \times \{0,1\}$ has no contribution  to the expression of $R(F).$ From \cite{D-94} one can always reduce to the case in which only one circle occurs in each fixed point class.

\begin{definition} Let $K$ a fixed point class of $F$ and $ \Psi(K) = C\in G_{\phi}.$ The one-parameter fixed point index of $K$ is the $C$-component of $R(F),$ $i(F,C),$ in ${HH_{1}(\mathbb{Z}G, (\mathbb{Z}G)^{\phi})}_{C}.$  The one-parameter fixed point index $i(F,C)$ is zero if $i(F,C)$ is the trivial homology class. 
\end{definition}

\begin{definition} Given a cellular homotopy $F: X \times I \to X$ the  one-parameter Nielsen number, $N(F)$, of $F$ 
	is the number of components $i(F,C)$ with nonzero fixed point index.
\end{definition}

\begin{definition} The  one-parameter Lefschetz class, $L(F)$, of $F$ is defined by;  
	$$L(F) = \displaystyle \sum_{C \in G_{\phi} - G_{\phi}(\partial F)} j_{C}(i(F,C)) $$ where 
	$j_{C}: H_{1}(Z(g_{C})) \to H_{1}(G)$ is induced by the inclusion $Z(g_{C}) \subset G$.
\end{definition}

\begin{remark} From \cite[Theorem 1.9 item c]{G-N-94}, to compute the one-parameter trace $R(F)$ of $F: X \times I \to X$  is enough compute $R(F')$ for $F'$ where $F'$ is a map homotopic to $F,$ relative to $X \times \{0,1\},$ which is cellular. 
\end{remark}

%%%%%%%%%%%%%%%%%%%%%%%%%%%%%%%%%%%%%%%%%%%%%%%%%%%%%%%%%%

\section{Semiconjugacy classes on n-torus} \label{section-3}

In this section we describe some results about the semiconjugacy classes on a n-torus related to a homotopy $F: \mathbb{T}^{n} \times I \to \mathbb{T}^{n}.$

Let $\mathbb{T}^{n} = {\mathbb{R}^{n}}/{\mathbb{Z}^{n}}$ be the $n$-torus and $v = [(0,0, ... ,0)].$ We denote $$G = \pi_1(\mathbb{T}^{n},v) = \langle u_1,u_2, ... ,u_n|u_iu_ju_i^{-1}u_j^{-1}=1, \,\,\, \textrm{for all} \,\, i \neq j \rangle.$$ Given $F: \mathbb{T}^{n} \times I \to \mathbb{T}^{n}$ a homotopy, where $I$ is the unit interval, we denote by $w = F(v,I)$ the path in $\mathbb{T}^{n}.$ Assume that $w$ is a loop in $\mathbb{T}^{n}.$ Therefore we can write 
$$[w] = u_1^{c_1}u_2^{c_2}...u_n^{c_n}$$ 
for some integers $c_1, c_2, ... , c_n.$  
Let $\phi$ be the homomorphism given by the following composition: $$\pi_1(\mathbb{T}^{n} \times I, (v,0)) \stackrel{F_{\#}}{\to} \pi_1(\mathbb{T}^{n}, F(v,0)) \stackrel{c^{-1}_{[\tau]}}{\to} \pi_1(\mathbb{T}^{n},v),$$
where $\tau$ is a base path from $v$ to $F(v,0).$ 

Let us consider the isomorphism $\Theta : G = \pi_1(\mathbb{T}^{n},v) \to \mathbb{Z}^{n}$ defined by $\Theta(u_1^{k_1} \cdots u_n^{k_n}) = (k_1, \cdots, k_n).$ By abuse of notation we will sometimes write $\Theta(g) \equiv g.$

Two elements  $g_{1}$ and  $ g_{2} $ in $G$ belong to the same semiconjugacy class if, and only if, there exists $g \in G$ such that $g_1 = g g_2 \phi(g^{-1}),$ in this case this is equivalent to saying; 
$$(\phi - P_{\#})(\Theta(g)) = \Theta(g_2) - \Theta(g_1),$$ 
where $P: \mathbb{T}^{n} \times I \to \mathbb{T}^{n}$ is the projection. Thus we have:

\begin{lemma} \label{lemma-semi} For each $g \in G$ the semicentralizer $Z(g)$ is isomorphic to the kernel of $(\phi-P_{\#}).$
\end{lemma}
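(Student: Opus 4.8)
The plan is to exploit directly that $G=\pi_1(T^n,v)=\langle u_1,\dots,u_n\mid [u_i,u_j]=1\rangle$ is free abelian of rank $n$, so that the conjugation term in the definition of the semicentralizer collapses. Writing $G$ additively, I would first recall that for $h\in G$ the semicentralizer is $Z(h)=\{x\in G \mid h=x\,h\,\phi(x^{-1})\}$, and that since $P\colon T^n\times I\to T^n$ is the projection we have $P_{\#}=\mathrm{id}$, so that $(\phi-P_{\#})(x)=\phi(x)-x$ for every $x\in G$.

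Next I would rewrite the defining relation $h=x\,h\,\phi(x^{-1})$ in additive notation. Using commutativity of $G$, the right-hand side is $x+h-\phi(x)$, so the relation becomes $h=x+h-\phi(x)$, which simplifies to $\phi(x)-x=0$, i.e.\ $(\phi-P_{\#})(x)=0$. The key observation is that $h$ cancels and plays no role whatsoever: the condition is exactly $x\in\ker(\phi-P_{\#})$, independently of the chosen $h$. Conversely, any $x\in\ker(\phi-P_{\#})$ satisfies $\phi(x)=x$, whence $x+h-\phi(x)=h$ and so $x\in Z(h)$.

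Therefore $Z(g)=\ker(\phi-P_{\#})$ as subgroups of $G$ for every $g$, which in particular yields the asserted isomorphism (indeed an equality). I do not expect a genuine obstacle here: the only points requiring care are the bookkeeping of the sign convention, namely that $(\phi-P_{\#})(x)=\phi(x)-x$ and that $P_{\#}=\mathrm{id}$, and the observation---already implicit in Lemma \ref{lemma-se}---that it is precisely the abelianness of $G$ that removes the dependence on the representative $g$.
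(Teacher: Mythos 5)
Your proposal is correct and coincides with the paper's own (essentially unstated) argument: the paper derives the lemma immediately from the displayed equivalence $g_1 = g\,g_2\,\phi(g^{-1}) \iff (\phi-P_{\#})(g)=g_2-g_1$, and specializing to $g_1=g_2=h$ gives exactly your computation that $Z(h)=\ker(\phi-P_{\#})$, independent of $h$. Your bookkeeping of $P_{\#}=\mathrm{id}$ and the sign is accurate, so nothing further is needed.
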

\begin{proof}
It follows from the definition of $Z(g)$ given on page 3. 
\end{proof}

\begin{proposition} \label{prop-nonzero}
	Let $F: \mathbb{T}^{n} \times I \to \mathbb{T}^{n}$ be a homotopy. If  the Nielsen number of $F$ restricted to $\mathbb{T}^{n}$ is nonzero then $R(F) = 0,$ which implies $L(F) = 0$ and $N(F) = 0.$
\end{proposition}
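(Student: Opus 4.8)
The plan is to reduce the vanishing of $R(F)$ to the triviality of all the semicentralizers, exactly as in the $S^{1}$ case treated in Section 3. Since $G = \pi_{1}(T^{n},v) \cong \mathbb{Z}^{n}$ is abelian and $[P_{\#}] = I$, the endomorphism $\phi - P_{\#}$ of $\mathbb{Z}^{n}$ has matrix $[\phi] - I$. The first step is to connect the hypothesis $N(F|_{T^{n}}) \neq 0$ to this matrix. By the classical computation of the Nielsen number on tori in \cite{B-B-P-T-75}, a self-map $f$ of $T^{n}$ whose induced map on $H_{1}(T^{n}) \cong \mathbb{Z}^{n}$ has matrix $[\phi]$ satisfies $N(f) = |\det(I - [\phi])|$; applying this to a slice of $F$ (all slices being homotopic, hence of equal Nielsen number) gives $N(F|_{T^{n}}) = |\det([\phi] - I)| = |\det(\phi - P_{\#})|$.

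Second, I would observe that $N(F|_{T^{n}}) \neq 0$ forces $\det(\phi - P_{\#}) \neq 0$, so the integral linear map $\phi - P_{\#} : \mathbb{Z}^{n} \to \mathbb{Z}^{n}$ is injective and $\ker(\phi - P_{\#}) = 0$. By Lemma \ref{lemma-semi}, every semicentralizer satisfies $Z(g) \cong \ker(\phi - P_{\#}) = 0$, so each $Z(g_{C})$ is the trivial group and $H_{1}(Z(g_{C})) = 0$ for every semiconjugacy class $C$.

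Third, I would feed this into the definition of the one-parameter trace. By Definition \ref{definition-trace} together with the decomposition of Proposition \ref{prop-decomposition}, the trace $R(F)$ lives in $\bigoplus_{C} H_{1}(Z(g_{C}))$, which is now a direct sum of trivial groups; hence $R(F) = 0$. Consequently every fixed point index $i(F,C)$ is the trivial homology class, so the number of nonzero components is zero, giving $N(F) = 0$, and $L(F) = \sum_{C} j_{C}(i(F,C)) = 0$.

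The only genuinely delicate point is the first step: matching the hypothesis on $N(F|_{T^{n}})$ with the determinant of $\phi - P_{\#}$. One must check that the basepoint-change isomorphism $c_{[\tau]}$ built into $\phi$ does not alter the induced matrix on $H_{1}$ — which holds because inner automorphisms act trivially on the abelianization — and that the relevant self-map of $T^{n}$ is indeed captured by $[\phi]$ rather than a twisted variant. Once this identification is secure, the remaining steps are formal consequences of Lemma \ref{lemma-semi}, Definition \ref{definition-trace}, and the vanishing of $H_{1}$ of the trivial group.
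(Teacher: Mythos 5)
Your proposal follows exactly the paper's own argument: invoke the Brooks--Brown--Park--Taylor formula to get $\det([\phi]-I)\neq 0$, apply Lemma \ref{lemma-semi} to conclude that every semicentralizer is trivial, and then use the decomposition of Proposition \ref{prop-decomposition} together with Definition \ref{definition-trace} to force $R(F)=0$ and hence $N(F)=L(F)=0$. The extra care you take over the basepoint-change isomorphism is a reasonable refinement, but the route is the same as the paper's.
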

\begin{proof}
	If $N(F|_{\mathbb{T}^{n}}) \neq 0$ then by \cite{B-B-P-T-75} we have $|det([\phi] - I)| \neq 0.$ From Lemma \ref{lemma-semi} the semicentralizer $Z(g)$ is trivial for all $g$ in $G.$ Thus $H_1(Z(g_{C}))$ is trivial for each $g_{C}$ which represents a semiconjugacy class $C.$ By decomposition presented in Section \ref{section-2} we must have $HH_1(\mathbb{Z}G, (\mathbb{Z}G)^{\phi})=0.$ 
	Therefore, we obtain $R(F) = 0,$ which implies $L(F) = 0$ and $N(F) = 0.$	
\end{proof}
Note that in the situation of Proposition \ref{prop-nonzero} the cardinality of $G_{\phi}$ is infinite.

From now on, we will assume that the classical Nielsen number of $F: \mathbb{T}^{n} \times I \to \mathbb{T}^{n}$ restricted to $\mathbb{T}^{n}$ is zero, that is, $|det([\phi]- I)| = 0.$ Let $w_1$  a eigenvector of $[\phi]$ associated to 1. Complete $\{w_1, w_2, ... , w_n\}$  for a basis of $ \mathbb{R}^{n}.$  With respect to  this new base the matrix of $[\phi]$ has the following expression:
$$[\phi] = 
\left( \begin{array}{cccc}
	1 & b_{12} & \cdots & b_{1n}	\\
	0 & b_{22} &  \cdots & b_{2n}   \\
	\vdots & \vdots   &    &  \vdots \\
	0  & b_{n2}	&  \cdots &  b_{nn} \\	
\end{array}\right).$$

We will assume from now on that $[\phi]$ has the above expression. Also we denote $B = u_1^{k_1} ... u_n^{k_n}$ and $D = u_1^{l_1}...u_n^{l_n}$  elements in $G,$ where $k_j, l_j \in \mathbb{Z},$ for all $1 \leq j \leq n.$

\begin{lemma} \label{lemma-1}
	The 1-chain  $B \otimes D$ is a cycle in $HH_1(\mathbb{Z}G, (\mathbb{Z}G)^{\phi})$ if, and only if, the element $(k_1,...,k_n) \in \mathbb{Z}^{n} $ belongs to the kernel of $([\phi]-I)$. Therefore,
	if $rank([\phi] - I)= n-1$ then $B \otimes D$ is a cycle if, and only if, $k_2 = ... = k_n = 0.$
\end{lemma}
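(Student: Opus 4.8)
The plan is to compute the Hochschild differential $d_1(B \otimes D)$ explicitly and read off the cycle condition, then convert it into linear algebra. Using the formula $d_1(r \otimes m) = mr - rm$ recorded in Subsection~\ref{Hochschild Homology}, together with the bimodule structure $g.m = gm$ and $m.g = m\phi(g)$ of $(\mathbb{Z}G)^{\phi}$, the right action of $B$ on $D$ contributes $D\phi(B)$ and the left action contributes $BD$, so that
$$d_1(B \otimes D) = D\phi(B) - BD.$$
Since $G$ is abelian we have $BD = DB$, whence $d_1(B \otimes D) = D(\phi(B) - B)$. Because $D$ is a unit of $\mathbb{Z}G$, this element vanishes exactly when $\phi(B) - B = 0$ in $\mathbb{Z}G$, i.e. when $\phi(B) = B$ as elements of $G$ (the difference of two distinct group elements is a nonzero element of the group ring). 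Thus $B \otimes D$ is a cycle precisely when $\phi(B) = B$.

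Next I would translate $\phi(B) = B$ into matrix form. Writing $B = u_1^{k_1}\cdots u_n^{k_n}$ and identifying $G \cong \mathbb{Z}^{n}$ additively, the element $B$ corresponds to the vector $(k_1,\ldots,k_n)^{T}$ and $\phi$ acts by $[\phi]$. Hence $\phi(B) = B$ is equivalent to $[\phi](k_1,\ldots,k_n)^{T} = (k_1,\ldots,k_n)^{T}$, that is, to $([\phi]-I)(k_1,\ldots,k_n)^{T} = 0$, which says exactly that $(k_1,\ldots,k_n) \in \ker([\phi]-I)$. This gives the first assertion.

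For the final assertion I would invoke the special form assumed for $[\phi]$. Since its first column is $(1,0,\ldots,0)^{T}$, the first column of $[\phi]-I$ is zero, so $(1,0,\ldots,0)$ always lies in $\ker([\phi]-I)$. If $\mathrm{rank}([\phi]-I) = n-1$, then the kernel is one dimensional, hence spanned by $(1,0,\ldots,0)$; every integer vector in it is therefore of the form $(k_1,0,\ldots,0)$, forcing $k_2 = \cdots = k_n = 0$. Combined with the first part, this yields the claim.

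I expect the only genuinely delicate point to be the bookkeeping in the differential, namely applying $\phi$ on the correct (right) side so that the cycle condition emerges as $\phi(B) = B$ rather than some variant involving the left action or $\phi^{-1}$; once abelianness is used to collapse $BD = DB$ and cancel the unit $D$, everything that remains is routine integer linear algebra.
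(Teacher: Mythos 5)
Your proposal is correct and follows essentially the same route as the paper: compute $d_1(B\otimes D)=D\phi(B)-BD$, use that $G$ is abelian to reduce the cycle condition to $([\phi]-I)(k_1,\ldots,k_n)^{T}=0$, and then use the rank hypothesis together with the special form of $[\phi]$ to force $k_2=\cdots=k_n=0$. Your added justifications (cancelling the unit $D$, and noting that the kernel is spanned by $(1,0,\ldots,0)$ when the rank is $n-1$) are correct refinements of steps the paper leaves implicit.
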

\begin{proof} 
In fact, the 1-chain $B \otimes D$ is a cycle if and only if  $ d_1(B \otimes D) = 0,$ that is, if and only if $0 = D\phi(B)- BD.$ Since $G$ is abelian then this is equivalent $(\phi-I)(B) = 0.$ The last equation is equivalent to say that 
	$(k_1, ... , k_n) \in ker([\phi]-I).$ In other words 
 $([\phi]-I)(B) = 0$ is equivalent to
	$$\left( \begin{array}{cccc}
		0 & b_{12} & \cdots & b_{1n}	\\
		0 & b_{22}-1 &  \cdots & b_{2n}   \\
		\vdots & \vdots   &    &  \vdots \\
		0  & b_{n2}	&  \cdots &  b_{nn}-1 \\	
	\end{array}\right) 
	\left(\begin{array}{c}
		k_1 \\
		k_2 \\
		\vdots \\
		k_n \\
	\end{array}
	\right) = 0 .$$
	
Thus if $rank([\phi]-I) = n-1$ then the system of equations above implies $k_2 = ... = k_n = 0,$ and therefore the 1-cycle $B \otimes D$ is written as $u_1^{k_1} \otimes D.$
\end{proof}

Let $E = u_1^{d_1} \cdots u_n^{d_n}.$
Given a 2-chain $ B \otimes D \otimes E \in C_{2}(\mathbb{Z}G, (\mathbb{Z}G)^{\phi})$ then
$$\begin{array}{l}
	d_{2}(B \otimes D \otimes E) 
	= D \otimes E\phi(B) - BD \otimes E + B \otimes DE. \\
\end{array}$$ 

The above expression will be used in the proof of the next result.

\begin{proposition} \label{prop-1}
	The 1-chain $u_1^{k_1} \otimes D \in C_{1}(\mathbb{Z}G,(\mathbb{Z}G)^{\phi})$ is homologous to the 1-chain $k_1 u_1 \otimes u_1^{k_1-1}D$ for all $k_1 \in \mathbb{Z}$. 
\end{proposition}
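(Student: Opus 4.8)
The plan is to prove the statement by induction on $k_1$, driving every homology by a single family of Hochschild $2$-chains. The structural fact I would isolate first is that $u_1$ is fixed by $\phi$: since the first column of $[\phi]$ is $(1,0,\dots,0)^{t}$, we have $\phi(u_1)=u_1$. This is precisely what makes the boundary formula collapse to something usable, because the right action on $(\mathbb{Z}G)^{\phi}$ is twisted by $\phi$, and any factor of $u_1$ passing through that slot would otherwise be distorted.

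First I would apply the Hochschild boundary $d_2$ to the $2$-chain $u_1 \otimes u_1^{k_1-1} \otimes D$. Using the formula $d_{2}(B \otimes D \otimes E) = D \otimes E\phi(B) - BD \otimes E + B \otimes DE$ recorded just before the statement, together with $\phi(u_1)=u_1$ and the commutativity of $G$, this evaluates to
$$d_{2}(u_1 \otimes u_1^{k_1-1} \otimes D) = u_1^{k_1-1} \otimes u_1 D - u_1^{k_1} \otimes D + u_1 \otimes u_1^{k_1-1}D.$$
Hence, modulo boundaries,
$$u_1^{k_1} \otimes D \ \sim\ u_1^{k_1-1} \otimes u_1 D + u_1 \otimes u_1^{k_1-1}D. \qquad (\ast)$$
This relation, valid for every integer $k_1$ and every $D$, is the engine of the argument: it extracts one factor of $u_1$ from the first tensor slot at the cost of producing a genuine $u_1 \otimes (\cdots)$ term.

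Next I would run the induction. The base case $k_1=1$ is the trivial identity $u_1 \otimes D = 1\cdot u_1 \otimes u_1^{0}D$, while $k_1=0$ follows from Lemma \ref{lemma-unit}, which gives $1 \otimes D \sim 0$, matching $0 \cdot u_1 \otimes u_1^{-1}D$. For $k_1 \geq 1$ I apply $(\ast)$ and then the induction hypothesis to $u_1^{k_1-1} \otimes (u_1 D)$, which is homologous to $(k_1-1)u_1 \otimes u_1^{k_1-1}D$; adding the leftover $u_1 \otimes u_1^{k_1-1}D$ yields $k_1 u_1 \otimes u_1^{k_1-1}D$. For $k_1 \leq -1$ I rearrange $(\ast)$ and substitute $D \mapsto u_1^{-1}D$ to obtain a downward recursion expressing $u_1^{k_1}\otimes D$ in terms of $u_1^{k_1+1}\otimes u_1^{-1}D$ and a single $u_1\otimes(\cdots)$ term, then descend from the base case at $k_1=0$.

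The one place demanding care is the identity $\phi(u_1)=u_1$: without it the term $E\phi(B)$ in the boundary would not reduce to $u_1^{k_1-1}\otimes u_1 D$, and the clean recursion $(\ast)$ would be destroyed. So the main (and rather mild) obstacle is bookkeeping — confirming that the eigenvector normalization of $[\phi]$ is in force, and setting up the negative-exponent recursion consistently with the $k_1=0$ base case. Once $(\ast)$ is established, the remaining algebra is entirely routine.
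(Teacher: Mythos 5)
Your proposal is correct and follows essentially the same route as the paper: the paper bounds the $2$-chain $u_1^{s}\otimes u_1\otimes D$ while you bound $u_1\otimes u_1^{k_1-1}\otimes D$, but both yield the same recursion, rely on $\phi(u_1)=u_1$ (which you rightly make explicit), and conclude by induction in each direction with the $k_1=0$ case handled via Lemma~\ref{lemma-unit}.
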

\begin{proof}
	For $k_1 = 1$ the proposition is clearly true. For $k_1 = 0$ the result is a consequence of Lemma \ref{lemma-unit}. Let us assume that for some $s > 0$ the 1-chain $u_1^{s} \otimes D$  is homologous to $ s u_1 \otimes u_1^{s-1} D$ for any $D$ in $G.$  
	Taking the 2-chain,
	$u_1^{s} \otimes u_1 \otimes D \in C_{2}(\mathbb{Z}G,(\mathbb{Z}G)^{\phi}),$ we obtain  
	{ \small $$\begin{array}{lll}
			d_{2}(u_1^{s} \otimes u_1 \otimes D ) 
			& = &  u_1 \otimes D u_1^{s} - 
			u_1^{s+1} \otimes  D + u_1^{s} \otimes  u_1 D \\
			& \sim &  u_1 \otimes u_1^{s} D  - 
			u_1^{s+1} \otimes  D +  s u_1 \otimes u_1^{s-1} u_1 D \\
			& = & (s+1)u_1 \otimes u_1^{s}D - 
			u_1^{s+1} \otimes  D.  \\
		\end{array}$$ }
Therefore $(s+1)u_1 \otimes  u_1^{(s+1)-1}D \sim u_1^{s+1} \otimes D.$  
	By induction the result follows. The proof for case $k_1 < 0$ is made in an analogous way. 
\end{proof}

\begin{proposition} \label{prop-3}
	If $rank([\phi]-I) = n-1$ then the 1-cycle $u_1^{-1} \otimes D$  is not homologous to zero, for any $D \in G.$ 
\end{proposition}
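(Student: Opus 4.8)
The plan is to detect the homology class of $u_1^{-1}\otimes D$ by a $\mathbb{Z}$-linear functional on $HH_1(\mathbb{Z}G,(\mathbb{Z}G)^\phi)$ that is manifestly nonzero on this cycle. First I would note that $u_1^{-1}\otimes D$ is genuinely a cycle: by the formula $d_1(B\otimes D)=D\phi(B)-BD$ and the abelianness of $G$, being a cycle is equivalent to $(\phi-I)(u_1^{-1})=0$, which holds because $u_1\in\ker([\phi]-I)$ by the kernel computation in Lemma \ref{lemma-1} under the standing hypothesis $\mathrm{rank}([\phi]-I)=n-1$.

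The key construction is the group homomorphism $p:G\to\mathbb{Z}$ sending $u_1^{a_1}\cdots u_n^{a_n}$ to $a_1$ (well defined since $G\cong\mathbb{Z}^n$), and the induced $\mathbb{Z}$-linear map $\ell:C_1(\mathbb{Z}G,(\mathbb{Z}G)^\phi)\to\mathbb{Z}$ determined on generators by $\ell(B\otimes m)=p(B)$. I would then verify that $\ell$ annihilates every boundary. Using the second Hochschild differential recorded before Proposition \ref{prop-1},
$$d_2(B\otimes D'\otimes E)=D'\otimes E\phi(B)-BD'\otimes E+B\otimes D'E,$$
and the fact that $\ell$ reads only the first tensor factor, one gets $\ell\bigl(d_2(B\otimes D'\otimes E)\bigr)=p(D')-p(BD')+p(B)=0$ since $p$ is a homomorphism. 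Hence $\ell$ descends to a homomorphism $HH_1(\mathbb{Z}G,(\mathbb{Z}G)^\phi)\to\mathbb{Z}$. Because $\ell(u_1^{-1}\otimes D)=p(u_1^{-1})=-1\neq 0$, the class $u_1^{-1}\otimes D$ cannot be a boundary, which is exactly the assertion.

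I would also indicate the conceptual reason behind the computation, to match the decomposition used elsewhere in the paper. Under the standing rank hypothesis, Lemma \ref{lemma-semi} together with Lemma \ref{lemma-1} gives $Z(g_C)=\langle u_1\rangle\cong\mathbb{Z}$ for the semiconjugacy class $C$ marked by $g_C=u_1^{-1}D$, so by Proposition \ref{prop-decomposition} the corresponding component $HH_1(\mathbb{Z}G,(\mathbb{Z}G)^\phi)_C$ is infinite cyclic; the functional $\ell$ restricts on this component to the isomorphism onto $H_1(\mathbb{Z})\cong\mathbb{Z}$, and via Proposition \ref{prop-1} one checks $u_1^{-1}\otimes D\sim-(u_1\otimes u_1^{-2}D)$, a generator up to sign. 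This is the same mechanism as in the $S^1$ case and in \cite[Proposition 3.9]{S-20}.

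The step I expect to require the most care is the verification that $\ell$ is well defined on homology, i.e. that it kills all $2$-boundaries: the computation above is short precisely because $\ell$ ignores the $\phi$-twisted second factor, and I would make sure the twisting in $E\phi(B)$ genuinely plays no role and that extending $\ell$ from generators to all of $C_1$ creates no ambiguity (the tensor products being taken over $\mathbb{Z}$). Everything else---identifying the cycle, its marker, and the semicentralizer---follows directly from the lemmas already established.
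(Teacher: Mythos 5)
Your argument is correct, but it is a genuinely different route from the one in the paper. The paper proves Proposition \ref{prop-3} by invoking the Geoghegan--Nicas chain of natural isomorphisms $H_{1}(Z(h)) \to H_{1}(G,\mathbb{Z}(G/Z(h))) \to H_{1}(G,\mathbb{Z}(C(h))) \to HH_{1}(\mathbb{Z}G,(\mathbb{Z}G)^{\phi})_{C(h)}$: since $Z(h)\cong\ker([\phi]-I)\cong\mathbb{Z}$ under the rank hypothesis, the generator $u_1^{s}$ is sent to $-s(u_1^{-1}\otimes u_1 g)$, and triviality of that cycle would force $H_1(Z(h))=0$, a contradiction. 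You instead build an explicit $\mathbb{Z}$-linear functional $\ell(B\otimes m)=p(B)$ on the chain level and check directly that it kills $d_2$ (your computation $p(D')-p(BD')+p(B)=0$ is right, and $\ell$ is unambiguous because $C_1=\mathbb{Z}G\otimes_{\mathbb{Z}}(\mathbb{Z}G)^{\phi}$ is free abelian on the elementary tensors $g\otimes h$, $g,h\in G$; the $\phi$-twist only affects the second factor, which $\ell$ ignores). Since $\ell(u_1^{-1}\otimes D)=-1$, the cycle is not a boundary. Your approach is more elementary and self-contained --- it avoids the black-boxed isomorphism from \cite{G-N-94} --- and in fact proves slightly more, since $\ell$ detects $u_1^{-1}\otimes D$ without using $rank([\phi]-I)=n-1$ (the hypothesis is only needed, via Lemma \ref{lemma-1}, to know the chain is a cycle at all, and under the standing form of $[\phi]$ it always is). What the paper's approach buys in exchange is the identification of the component $HH_1(\mathbb{Z}G,(\mathbb{Z}G)^{\phi})_{C}$ with $H_1(Z(g_C))\cong\mathbb{Z}$ and of $u_1^{-1}\otimes u_1g$ with a generator, which is reused later to conclude that each such cycle contributes exactly one nonzero $C$-component and maps to $\pm[u_1]$ in $H_1(G)$; your functional gives nontriviality but not, by itself, that component-by-component bookkeeping (your closing remark sketches how to recover it, and that sketch is consistent with Propositions \ref{prop-decomposition} and \ref{prop-1}).
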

\begin{proof}
We can write $u_1^{-1} \otimes D$ as follows; $u_1^{-1} \otimes u_1 g$, where $g = u_1^{-1}D.$
	It follows from Lemma \ref{lemma-semi} that the semicentralizer $Z(g)$ is isomorphic to $ker([\phi]-I)$ for each $g \in G.$ Since $rank([\phi]-I) = n-1$ then $Z(g) = \{u_1^{s}| s \in \mathbb{Z} \} $ $\cong \mathbb{Z}$. Therefore $H_{1}(Z(g)) \cong \mathbb{Z}$. From \cite[pg 433]{G-N-94}  
	there is a sequence of natural isomorphisms; 
	{\footnotesize
		$$ H_{1}(Z(g)) \to  H_{1}(G, \mathbb{Z}(G/Z(g))) \to H_{1}(G,\mathbb{Z}(C(g))) \to HH_{1}(\mathbb{Z}G, (\mathbb{Z}G)^{\phi})_{C(g)}. $$}
	The class of element $u_1^{s}$ is sent in the class of the 1-cycle $u_1^{s}\otimes u_1^{-s}g$, which is homologous to a 1-cycle; $-s u_1^{-1}\otimes u_1 g $ $=-s(u_1^{-1}\otimes u_1 g)$. 
	Thus, if the 1-cycle was homologous to zero we would have $H_{1}(Z(g)) \cong 0$ which is a contradiction.
\end{proof}

Let us denote by $B_i = u_1^{k_1^{i}} \cdots u_n^{k_n^{i}}$ and 
$D_i = u_1^{l_1^{i}} \cdots u_n^{l_n^{i}}$ elements in $G,$ where $k_{j}^{i}, l_{j}^{i} \in \mathbb{Z}.$ 

\begin{proposition}  \label{prop-2}
	If $rank([\phi]-I) = n-1$ then each 1-cycle $\displaystyle \sum^{t}_{i=1} a_{i} B_i \otimes D_i \in C(\mathbb{Z}G, (\mathbb{Z}G)^{\phi}) $ is homologous to a 1-cycle with the following expression: 
	$\displaystyle \sum^{\bar{t}}_{i=1} \bar{a_{i}} u_1 \otimes D_i^{'}.$
\end{proposition}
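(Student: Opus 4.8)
The plan is to exploit the direct-sum decomposition of the Hochschild homology over semiconjugacy classes (Proposition \ref{prop-decomposition}) and to show that, under the hypothesis $rank([\phi]-I)=n-1$, every $C$-component of $HH_1(\mathbb{Z}G,(\mathbb{Z}G)^{\phi})$ is an infinite cyclic group generated by a cycle of the form $u_1\otimes D'$. Granting this, an arbitrary cycle $z=\sum_{i=1}^{t}a_i B_i\otimes D_i$ splits into finitely many components, each of which is homologous to an integer multiple of such a generator, and reassembling the pieces gives exactly a cycle of the desired shape $\sum\bar a_i\, u_1\otimes D_i'$.

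In more detail, I would first recall that a generating $1$-chain $B\otimes D$ has marker $BD\in G$, so that $z$ decomposes as $z=\sum_C z_C$, the outer sum ranging over the finitely many semiconjugacy classes $C$ that carry a marker $B_iD_i$. Since the boundary operator respects the marking, each $z_C$ is separately a $1$-cycle lying in the $C$-component. By Lemma \ref{lemma-semi} and the rank hypothesis, $Z(g_C)\cong ker([\phi]-I)\cong\mathbb{Z}$, the copy of $\mathbb{Z}$ being generated by $u_1$; hence $H_1(Z(g_C))\cong\mathbb{Z}$, and via Proposition \ref{prop-decomposition} the whole $C$-component of $HH_1$ is infinite cyclic. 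To pin down the generator, I would follow the chain of natural isomorphisms used in the proof of Proposition \ref{prop-3} (from \cite[pg 433]{G-N-94}): the class of $u_1\in Z(g_C)$ is sent to the class of $u_1^{-1}\otimes u_1 g_C$, and Proposition \ref{prop-1} rewrites this, up to sign, as a multiple of $u_1\otimes(\cdots)$. Thus the generator of each $C$-component is represented by a cycle $u_1\otimes D_C'$, which is indeed a cycle by Lemma \ref{lemma-1}. Consequently each $z_C$, being an element of an infinite cyclic group, equals $\bar a_C$ times this generator in homology and is therefore homologous to $\bar a_C\,(u_1\otimes D_C')$. Summing over the finitely many relevant classes yields $z\sim\sum_C\bar a_C\, u_1\otimes D_C'$, of the claimed form.

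I expect the delicate point to be that one \emph{cannot} reduce the chain term by term. A single summand $B_i\otimes D_i$ whose first factor involves $u_2,\dots,u_n$ is not itself a cycle, since $d_1(B_i\otimes D_i)=D_i(\phi(B_i)-B_i)\neq 0$ by Lemma \ref{lemma-1} whenever $B_i\notin ker([\phi]-I)$; hence such a summand can never be homologous to a cycle in isolation, and no local manipulation can turn it into a $u_1$-term. The cancellation of these off-$u_1$ contributions is forced only by the global cycle condition together with the rank hypothesis, and this is exactly what the component decomposition packages.

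A more computational alternative, which I would keep in reserve, is to first apply the Leibniz-type relation $BB'\otimes D\sim B'\otimes D\phi(B)+B\otimes B'D$, read off from $d_2(B\otimes B'\otimes D)=B'\otimes D\phi(B)-BB'\otimes D+B\otimes B'D$, to peel each $B_i$ into single-generator factors, and then invoke Proposition \ref{prop-1} on the $u_1$ factors (together with Lemma \ref{lemma-unit} to discard the $1\otimes(\cdots)$ terms). Even in that route, however, the vanishing of the remaining $u_j$-terms for $j\geq 2$ would still have to be deduced from the cycle relation, so the homological bookkeeping via the decomposition into $C$-components seems the more transparent way to organize the argument.
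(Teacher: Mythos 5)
Your argument is correct, and it is organized rather differently from what the paper does: the paper's entire proof is a deferral, ``using Propositions \ref{prop-1} and \ref{prop-3}, this is an easy generalization of [S-20, Proposition 4.18]'', i.e.\ it leans on a chain-level computation carried out elsewhere for the $2$-torus --- essentially the ``computational alternative'' you keep in reserve (peel the $B_i$ apart with $d_2$, normalize the $u_1$-powers with Proposition \ref{prop-1}, discard $1\otimes(\cdots)$ terms by Lemma \ref{lemma-unit}, and let the cycle condition kill the $u_j$-terms for $j\geq 2$). Your primary route instead reads the statement off the structure theory: the chain complex splits over semiconjugacy classes, so each marked piece $z_C$ of a cycle is itself a cycle; under $rank([\phi]-I)=n-1$ the kernel of $[\phi]-I$ is spanned by $u_1$ (since the first column of $[\phi]-I$ vanishes), so $Z(g_C)\cong\mathbb{Z}$ and each $C$-component of $HH_1$ is infinite cyclic with generator represented, via the isomorphism chain of \cite[pg 433]{G-N-94} used in Proposition \ref{prop-3}, by a cycle of the form $u_1\otimes D_C'$; hence $z_C$ is an integer multiple of that generator in homology and the sum over the finitely many occupied classes has the required shape. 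This buys you a self-contained proof that makes transparent \emph{why} the off-$u_1$ terms must cancel (a point you rightly flag as the delicate one, since individual summands $B_i\otimes D_i$ need not be cycles), at the cost of invoking the full strength of Proposition \ref{prop-decomposition} rather than elementary boundary manipulations. One cosmetic slip: the class of $u_1^{s}$ maps to $u_1^{s}\otimes u_1^{-s}g_C$, so taking $s=1$ already gives a generator in the form $u_1\otimes D'$ directly, with no need to pass through $s=-1$ and Proposition \ref{prop-1}; this does not affect the validity of the argument.
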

\begin{proof}
	Using Propositions \ref{prop-1} and \ref{prop-3}, this is an easy generalization of \cite[Proposition 4.18]{S-20}.	
\end{proof}

\begin{corollary} \label{corollary-2}
	If the cycles $u_1 \otimes D_i $ and $u_1 \otimes D_j$ are 
	in different semiconjugacy classes for all $i \neq j$, $i,j \in \{1, ... , t \}$, then $\displaystyle \sum_{i=1}^{t} u_1 \otimes D_i $ is a nontrivial cycle. Furthermore, $u_1 \otimes D_i$ projects to the same class $[u_1] \in H_{1}(G).$
\end{corollary}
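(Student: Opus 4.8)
The plan is to treat the two assertions separately: the nontriviality of the sum will come from the semiconjugacy-class decomposition of Proposition \ref{prop-decomposition} together with the nontriviality of each summand, while the statement about the projection to $H_1(G)$ will come from tracing $u_1 \otimes D_i$ through the natural isomorphisms used in Proposition \ref{prop-3}.

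First I would record that each individual cycle $u_1 \otimes D_i$ is nontrivial. Applying Proposition \ref{prop-1} with $k_1 = -1$ and $D = u_1^2 D_i$ yields $u_1^{-1} \otimes u_1^2 D_i \sim -\,u_1 \otimes D_i$. Since $rank([\phi]-I) = n-1$, Proposition \ref{prop-3} guarantees that $u_1^{-1} \otimes u_1^2 D_i$ is not homologous to zero, and hence $u_1 \otimes D_i$ is a nontrivial cycle for every $i$. Next I would invoke the direct sum decomposition. The generating $1$-chain $u_1 \otimes D_i$ has marker $u_1 D_i$, so it lies in the summand $HH_1(\mathbb{Z}G,(\mathbb{Z}G)^{\phi})_{C(u_1 D_i)}$. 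By hypothesis the semiconjugacy classes $C(u_1 D_i)$ are pairwise distinct, so the cycles $u_1 \otimes D_i$ occupy distinct summands of $\bigoplus_{C} HH_1(\mathbb{Z}G,(\mathbb{Z}G)^{\phi})_C$. Projecting $\sum_{i=1}^{t} u_1 \otimes D_i$ onto the $C(u_1 D_j)$-summand returns exactly $u_1 \otimes D_j$, which is nontrivial by the previous step; therefore the whole sum is a nontrivial cycle.

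For the second assertion I would follow $u_1 \otimes D_i$ along the chain of natural isomorphisms appearing in the proof of Proposition \ref{prop-3}. By Lemma \ref{lemma-semi} the semicentralizer of the marker $h = u_1 D_i$ is $Z(h) \cong \ker([\phi]-I) = \{u_1^{s} \mid s \in \mathbb{Z}\} \cong \mathbb{Z}$, so $H_1(Z(h)) \cong \mathbb{Z}$. Under the isomorphism $H_1(Z(h)) \cong HH_1(\mathbb{Z}G,(\mathbb{Z}G)^{\phi})_{C(h)}$, the generator $u_1$ is sent to the class of $u_1^{s} \otimes u_1^{-s} g$ with $s = 1$; taking $g = u_1 D_i$ identifies this with $u_1 \otimes D_i$. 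The map $j_{C}$ induced by the inclusion $Z(h) \hookrightarrow G$ then sends this generator to $[u_1] \in H_1(G)$, independently of $i$, which is the claimed common projection.

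The hard part will be the sign and orientation bookkeeping in the third paragraph: one must check that $u_1 \otimes D_i$ corresponds to the \emph{positive} generator $u_1$ of $H_1(Z(h)) \cong \mathbb{Z}$, and not to a multiple or to its inverse, so that the projected class is exactly $+[u_1]$ rather than $\pm k\,[u_1]$. This is governed by the orientation conventions carried by the chain homotopy $\tilde{D}$ and by the direction chosen for the generating loop, exactly as in the $S^1$ computation where the opposite orientation of the $1$-cell flipped $\alpha$ to $-\alpha$; keeping this sign consistent across all the $D_i$ is where care is required.
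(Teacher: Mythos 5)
Your proof is correct and takes essentially the same route the paper intends: the corollary is stated there without proof, as an immediate consequence of the semiconjugacy-class decomposition (Proposition \ref{prop-decomposition}) together with Propositions \ref{prop-1} and \ref{prop-3}, which is exactly what you spell out --- distinct markers $u_1 D_i$ place the cycles in distinct direct summands, each summand class is nontrivial, and the chain of natural isomorphisms sends each $u_1 \otimes D_i$ to $[u_1]$. Your final worry about signs resolves itself the way you indicate: since $u_1 \otimes D_i$ is precisely the image of the generator $u_1 \in Z(h)$, $h = u_1 D_i$, under the isomorphism $H_1(Z(h)) \cong HH_1(\mathbb{Z}G,(\mathbb{Z}G)^{\phi})_{C(h)}$, the map $j_C$ induced by inclusion sends it to $+[u_1]$ for every $i$.
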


%%%%%%%%%%%%%%%%%%%%%%%%%

%%%%%%%%%%%%%%%%%%%%%%%%%%%%%%%%%%%%%%%%%%%

\section{Proof of the main result}

This section shall be devoted to proof Theorem \ref{main-theorem-1}.

\begin{proof}
	Given $H: \mathbb{T}^{n} \times I \to \mathbb{T}^{n}$ by hypothesis there exists a affine homotopy $F$ homotopic to $H$ relative to $\mathbb{T}^{n} \times \{0,1\}.$ We have
 $F \circ ( p_{\mathbb{T}^{n}} \times Id) = p_{\mathbb{T}^{n}} \circ F^{'}$ where 
$$F^{'}(x_1, \cdots, x_n, t) = ( \displaystyle \sum_{j=1}^{n} b_{1j}x_j + c_1t + \epsilon_1 , \cdots , \displaystyle \sum_{j=1}^{n} b_{nj}x_j + c_nt + \epsilon_n ),$$
for some $b_{ij}, c_i \in \mathbb{Z}$ and $ 0 \leq \epsilon_i < 1.$

By \cite[Theorem 1.9 item a]{G-N-94} we have $R(H) = R(F),$ therefore is enough to check the proprieties of Theorem \ref{main-theorem-1} for the homotopy $F.$ Based on that we will compute $R(F).$ From \cite[Theorem 1.9 item c]{G-N-94} we can suppose $H$ cellular. By Proposition \ref{prop-nonzero} is enough to consider the case where $N(F|_{\mathbb{T}^{n}}) = 0,$ and therefore we can assume;

\begin{equation} \label{matrix-phi}
[\phi] = 
	\left( \begin{array}{cccc}
		1 & b_{12} & \cdots & b_{1n}	\\
		0 & b_{22} &  \cdots & b_{2n}   \\
		\vdots & \vdots   &    &  \vdots \\
		0  & b_{n2}	&  \cdots &  b_{nn} \\	
	\end{array}\right).
\end{equation}
	
Note that $w = F(v,I)$ is a loop in $\mathbb{T}^{n}.$ Thus $[w] = u_1^{c_1}u_2^{c_2}...u_n^{c_n},$ for some integers $c_1, c_2, ... , c_n.$ 
We denote by $A$ the following matrix:

\begin{equation} \label{matrix-A}
A = \left( \begin{array}{cccc}
		b_{12} & \cdots & b_{1n} & c_1	\\
		b_{22}-1 &  \cdots & b_{2n}  & c_2 \\
		\vdots   &    &  \vdots  & \vdots  \\
		b_{n2}	&  \cdots &  b_{nn}-1 & c_n \\	
	\end{array}\right).
\end{equation}	
	
	Our proof breaks into two cases. The case $rank(A) = n$ and   $rank(A) < n.$ Firstly we assume $rank(A) = n.$ Note that this  implies $rank([\phi] - I) = n-1.$

Since $\mathbb{T}^{n}$ is a polyhedron, it has a structure of a regular CW-complex. We take an orientation for each k-cell ${E}^{j}_{k}$ in $\mathbb{T}^{n}.$ From \cite[Proposition 4.1]{G-N-94} the trace $R(F)$ is independent of the choice of orientation of cells on $\mathbb{T}^{n}.$ This independence is in terms of homology class. 
	
On the universal covering space $\mathbb{R}^{n}$ we choose a k-cell $\tilde{E}^{j}_{k}$ which projects on ${E}^{j}_{k}.$ We orient $\tilde{E}^{j}_{k}$ compatible with ${E}^{j}_{k}.$ 
We can suppose that $\tilde{E}^{j}_{k}$ is contained in $Y = [0,1] \times \cdots \times [0,1] \subset \mathbb{R}^{n}.$
Considering $C_{\ast}(\mathbb{R}^{n})$ as a right $\mathbb{Z}[\pi_{1}(\mathbb{T}^{n})]$ chain complex as defined in Section \ref{section-2} we have 
	$$ \partial_{i}(e^{i}_{k}) = \displaystyle \sum_{j} [e^{i}_{k}: e^{i-1}_{j}]e^{i-1}_{j} $$ with $[E^{k}_{i}: E^{k-1}_{j}]= 
	[e^{i}_{k}: e^{i-1}_{j}]$ where $[E^{k}_{i}: E^{k-1}_{j}]$ is the incidence of a k-cell $E^{k}_{i}$ to a $(k-1)-$cell. Since that $\mathbb{T}^{n}$ is a regular CW complex then $[E^{k}_{i}: E^{k-1}_{j}] $ belongs to the set $\{0,1,-1\},$ see \cite{W-18}. From definition of the right $\mathbb{Z}G$ action on $C_{\ast}(\mathbb{R}^{n})$ and the fact that each k-cell is contained in $Y$ then, for each $j= 1, ... , n,$ the entries of matrices of operators $\tilde{\partial}_{j}$ will be composed by the following elements: $0, \pm 1, \pm u_i^{-1},$ where $ 1 \leq i \leq n.$ By definition;
	$$ R(F) = tr \left( \begin{array}{ccccc}
		-[\tilde{\partial}_{1}] \otimes [\tilde{D}_{0}] & 0 & 0 & \cdots & 0 \\
		0 & [\tilde{\partial}_{2}] \otimes [\tilde{D}_{1}] & 0 & \cdots & \\
		\vdots &    0 & \ddots &  & \vdots \\
		& \vdots &   &  & 0 \\ 
		0  & 0  &  0 &  & (-1)^{n+1} [\tilde{\partial}_{n}] \otimes [\tilde{D}_{n-1}] \\
	\end{array} \right ),
	$$
	where the elements of matrices $[\tilde{\partial_{j}}]_{ik}$ belong to the set $ \{0, \pm 1, \pm u_i^{-1}\},$ $ 1 \leq i \leq n.$  Thus the general expression of $R(F)$ in  $C_{1}(\mathbb{Z}G, (\mathbb{Z}G)^{\phi})$ would be; 
	{\small \begin{equation} \label{equation-4}
			R(F) =  -1 \otimes (\displaystyle\sum_{j=1}^{m} E_{j} )  + 1 \otimes (\displaystyle\sum_{j=1}^{\bar{m}} D_{j} ) + 
			\displaystyle \sum_{i} \left[ u_i^{-1} \otimes \displaystyle\sum_{j=1}^{n} A_{j}^{i} \right]  - \displaystyle \sum_{i} \left[ u_i^{-1} \otimes \displaystyle \sum_{j=1}^{p} B_{j}^{i} \right],  
	\end{equation} }
	where $E_j, D_j, A_j^{i}, B_j^{i}$ are elements in $G.$
	
	\smallskip
	
If there exists $\overline{F}: \mathbb{T}^{n} \times I \to \mathbb{T}^{n}$ homotopic to $F,$ relative to $\mathbb{T}^{n} \times \{0,1\},$ such that $Fix(\overline{F}) = \emptyset$ then  $R(F)$ is trivial and therefore $L(F) = N(F)= 0.$ From now on, we assume that each homotopy $\overline{F}: \mathbb{T}^{n} \times I \to \mathbb{T}^{n}$ homotopic to $F,$ relative to $\mathbb{T}^{n} \times \{0,1\},$ contains isolated circles in $Fix(\overline{F}).$ The number of these isolated circles for each $\overline{F}$ is finite because $\mathbb{T}^{n}$ is compact.

From Lemma \ref{lemma-unit} each 1-chain $1 \otimes E_j$ is a boundary. Therefore, the 1-chains $ 1 \otimes E_j$ and $-1 \otimes D_j $ are homologous to zero in $C_{1}(\mathbb{Z}G, (\mathbb{Z}G)^{\phi}).$ By Lemma \ref{lemma-1} the 1-chain $u_i^{-1} \otimes A_j^{i}$ is not a cycle for each $ 2 \leq i \leq n.$ Therefore, the 1-chains $u_i^{-1} \otimes A_j^{i}$ and $-u_i^{-1} \otimes B_j^{i},$ for $i \geq 2,$ can not appear in the expression of  $R(F)$ since $R(F)$ is a cycle in $HH_{1}(\mathbb{Z}G, (\mathbb{Z}G)^{\phi}).$

Now let us calculate $Fix(F).$ Note that $F(x,0) = F(x,1).$ Therefore the homotopy $F$ induces a map $\overline{F}:\mathbb{T}^{n+1} = \mathbb{T}^{n} \times \mathbb{S}^{1}  \to \mathbb{T}^{n}$ defined by 
$\overline{F}(x,[t]) = F(x,t).$  Is not too difficult to see that the number of path components in $Fix(F)$ and $Fix(\overline{F})$ are the same. Furthermore $\overline{F}(x,t) = H(x,t) + (\epsilon_1, \cdots , \epsilon_n)$ where $H: \mathbb{T}^{n+1} \to \mathbb{T}^{n}$ is linear and the $\epsilon_i$ are chosen such that $F$ has no fixed point in $\mathbb{T}^{n} \times \{0,1\}.$ The number of path components in $Fix(H)$ is the same as in $Fix(\overline{F}).$

Follows from \cite[Theorem 3.3]{J-01} that the number of path components in $Fix(F)$ is $D([H_{\#}] - [P_{\#}]) = D([\overline{F}_{\#}]-[P_{\#}]) = |det(A)|,$ because the first column of $[\overline{F}_{\#}]-[P_{\#}]$ is null. More precisely $Fix(F)$ is composed by $|det(A)|$ disjoint circles. In fact, we have $Fix(F) = p_{\mathbb{T}^{n}}((F^{'}-P)^{-1}(\mathbb{Z}^{n})).$ Follows from expression of $F^{'}$ that a point $z = (x_1, \cdots, x_n, t)$ belongs to the set $Fix(F^{'})$ if and only if $z$ is a solution of the following system:

\begin{equation} \label{system-solution}			
 \left( \begin{array}{ccccc}
 0 & b_{12} & \cdots & b_{1n} & c_1	\\
 0 &	b_{22}-1 &  \cdots & b_{2n}  & c_2 \\
	\vdots   &    &  \vdots  & \vdots  \\
 0 & b_{n2}	&  \cdots &  b_{nn}-1 & c_n \\	
\end{array}\right) \left( \begin{array}{c}
x_1	\\
x_2  \\
\vdots   \\
x_n \\
t \\	
\end{array}\right)  = \left( \begin{array}{c}
-\epsilon_1 + l_1	\\
-\epsilon_2 + l_2 \\
\vdots   \\
-\epsilon_{n-1} + l_{n-1} \\
-\epsilon_n + l_n\\	
\end{array}\right)
\end{equation}
for some $l_1, \cdots, l_n \in \mathbb{Z}.$
Note that in the System \eqref{system-solution} we have $0 \leq x_1 \leq 1,$ which implies that each path component in $Fix(F)$ is a  isolated circle. 

Note that two different circles belong to the different fixed point classes. In fact, let $C_1 = (x_1, x^{1}_{2}, \cdots, x^{1}_{n}, t^{1})$ and $ C_2 = (x_1, x^{2}_{2}, \cdots, x^{2}_{n}, t^{2})$ two circles in $Fix(F).$ Each fixed point class has the form  $p_{\mathbb{T}^{n}}(Fix(\widetilde{F}))$ where $\widetilde{F}$ is a lift of $F.$    We have $C_1 = \widetilde{F}(C_1),$ $C_2 = \widetilde{F}(C_2).$  Note that  each lift of $F$ has the form $F^{'}(x,t) + (m_1, \cdots, m_n)$ where $m_j \in \mathbb{Z}.$   Thus $C_1 - C_2 = \widetilde{F}(C_1-C_2)$ which implies $(\widetilde{F}-P)(C_1-C_2) = (0, \cdots, 0)$ where $P: \mathbb{R}^{n} \times I \to \mathbb{R}^{n}$ is the projection. As  $rank(A) = n$ then we must have $x^{1}_{2} = x^{2}_{2}, \cdots, x^{1}_{n} = x^{2}_{n}, t^{1} = t^{2}, $ and therefore $C_1 = C_2.$

The cardinality of the semiconjugacy classes $G_{\phi}$ is equal to $|det(A)|,$ that is, is the same as the number of isolated circles in $Fix(F) \cap (0,1).$ In fact, by Lemma \ref{lemma-se} the cardinality of the set $G_{\phi}$ is given by: $\#(coker(\phi-P_{\#})).$   
We have $[w] = u_1^{c_1}u_2^{c_2}...u_n^{c_n}$ 
for some integers $c_1, c_2, ... , c_n.$ Therefore the image of $(\phi-P_{\#})$ in $\pi_{1}(\mathbb{T}^{n})$ is generated by columns of the following matrices:
$$[\phi] - [P_{\#}] = 
\left( \begin{array}{cccc}
	0 & b_{12} & \cdots & b_{1n}	\\
	0 & b_{22}-1 &  \cdots & b_{2n}   \\
	\vdots & \vdots   &    &  \vdots \\
	0  & b_{n2}	&  \cdots &  b_{nn}-1 \\	
\end{array}\right) \,\,\,\,\, and \,\,\,\,\,
\left( \begin{array}{c}
	c_1	\\
	c_2 \\
	\vdots  \\
	c_n \\	
\end{array}\right),$$
that is, the image of $(\phi-P_{\#})$ is generated by the columns of matrix $A$ where $A$ is given by:
$$A = \left( \begin{array}{cccc}
	b_{12} & \cdots & b_{1n} & c_1	\\
	b_{22}-1 &  \cdots & b_{2n}  & c_2 \\
	\vdots   &    &  \vdots  & \vdots  \\
	b_{n2}	&  \cdots &  b_{nn}-1 & c_n \\	
\end{array}\right).$$

From hypothesis we have $rank(A) = n.$ Therefore $\#coker(\phi-P_{\#}) = 
\#(\pi_1(\mathbb{T}^{n})/im(\phi-P_{\#})) = \# (\mathbb{Z}^{n}/A(\mathbb{Z}^{n})) = 
|det(A)|$ since $A$ is non-singular.

Since each circle in $Fix(F^{'})$ is parallel to the axis of $x_1$ then choosing an orientation for a circle all the others will have the same orientation according to the orientation of these circles defined in \cite{D-G-90}. But the orientation of the circles in $Fix(F)$ is compatible with the orientation of the circles in $Fix(F^{'}).$ Thus, all circles in $Fix(F)$ have the same orientation and therefore all cycles in $R(F)$ will have the same signal. From these facts, the one-parameter trace of $F$ will have the following  expression in  $HH_{1}(\mathbb{Z}G,(\mathbb{Z}G)^{\phi}):$

\begin{equation} \label{equation-5}
		R(F) =   u_1^{-1} \otimes \displaystyle\sum_{j=1}^{m} A_{j} 
\end{equation} or 
\begin{equation} \label{equation-6}
		R(F) =    - u_1^{-1} \otimes \displaystyle\sum_{j=1}^{m} B_{j} 
\end{equation} 
where $A_{j},$ $B_{j}$ are elements in $G.$ Let us consider the  expression \eqref{equation-5}. The proof using the expression  \eqref{equation-6} is analogous.

From  Proposition \ref{prop-3} each 1-cycle $u_1^{-1} \otimes A_{j} $ is non trivial and thus represents a nonzero C-component. Furthermore, by definition gives in \cite[Page 434]{G-N-94} each circle in $Fix(F)$ contributes to only one element in $R(F).$  
Thus, each semiconjugacy class contains only one cycle $u_1^{-1} \otimes A_{j} $ in $R(F),$ and therefore the one-parameter Nielsen number of $F$ is; $$N(F) = m = |det(A)|.$$

From Section \ref{section-2}, the one-parameter Lefschetz class is the image of $R(F)$ in $H_{1}(\pi_{1}(\mathbb{T}^{n}),\mathbb{Z})$ by homomorphism induced by  inclusion $i: Z(g_{C}) \to \pi_{1}(\mathbb{T}^{n})$.
By Proposition \ref{prop-3} each cycle $u_1^{-1} \otimes  A_{j} $ will be sent in the same class $-[u_1].$  
Therefore the image of $R(F)$ in $H_{1}(\pi_{1}(\mathbb{T}^{n}),\mathbb{Z})$ is;  
$$ L(F) = \displaystyle\sum_{i=1}^{m} -[u_1] = -m[u_1] = -N(F)[u_1].$$
Thus $L(F) = N(F) \alpha,$ where $\alpha = [u_1]$ or $-[u_1].$

Now we assume $rank(A) < n.$ In this case we have $im(\phi-P_{\#}) \subsetneq \mathbb{Z}^{n}.$ Let $w_0 \notin im(\phi-P_{\#}).$ 
	Define $\widetilde{F}: \mathbb{T}^{n} \times I \to \mathbb{T}^{n}$ by $\widetilde{F}(x,t) = F(x,t) + w_0sin(2t\pi).$ The map $Q: \mathbb{T}^{n} \times I \times I \to \mathbb{T}^{n}$ define by 
	$Q(x,t,s) = F(x,t)+ sw_0sin(2t\pi)$ is a homotopy between $F$ and $\widetilde{H}$ relative to $\mathbb{T}^{n} \times \{0,1\}.$ Since  $w_0 \notin im(\phi-P_{\#})$ then  there are no circles in $Fix(\widetilde{H}) \cap (\mathbb{T}^{n} \times (0,1)).$ Therefore $R(\widetilde{F}) = 0$ which implies $R(F) = 0,$ $N(F) = 0$  and $L(F) = 0.$ 
\end{proof}

%%%%%%%%%%%%%%%%%%%%%%%%%%%%%%%%%%%%%%%%%%%%

\section{Applications}

In this section we present some applications of Theorems \ref{main-theorem-1}  for compute the minimum number of path components   in the fixed point set of some maps.

\

{\bf I.} Let $X$ be a finite CW complex and $F:  X  \times I \to X$ be a homotopy such that $F(x,0) = F(x,1).$ For example, when $X = \mathbb{T}^{n},$ all linear homotopies satisfies $F(x,0) = F(x,1),$ because $F(x,1) = F(x,0)+(d_1, ..., d_n),$ where $d_1, ..., d_n$ are integer numbers. 
Denote $S^{1} = \displaystyle \frac{I}{0 \sim 1}.$  The homotopy $F$ induces a map $\overline{F}: X \times S^{1} \to X$ defined by 
$$\overline{F}(x,[t]) = F(x,t).$$ 

It is not difficult to see that each homotopy 
$H: X \times I \times I \to X$ from $F$ to a map $F^{'}$ relative to 
$X \times \{0,1\}$ is equivalent to a homotopy $\overline{H}: X \times S^{1} \times I \to X$ from $\overline{F}$ to $\overline{F^{'}}$ relative to $(v,[0]).$ 
If $F$ has no fixed points in $X \times \{0,1\}$ then we must have 
$N(F|_{X}) = 0,$ and the minimum number of path components in $Fix(F)$ and $Fix(\overline{F}),$ as $F$ runs over a homotopy class of maps 
$X\times I \to X$ relative to $X\times \{0,1\}$, must coincide. 

Let us consider $X = \mathbb{T}^{n}$ and $F$ a affine homotopy.  Suppose that $N(F|_{\mathbb{T}^{n}}) = 0.$   In this case the one-parameter  Nielsen number of $F$ given in Theorem \ref{main-theorem-1} coincides with the invariant $D([{\overline{F}}_{\#}] - [{\overline{P}}_{\#}])$ presented in \cite[Theorem 3.3]{J-01}, where $P$ is the projection and the matrix of $F_{\#}$ is as in Theorem \ref{main-theorem-1}. In fact, from \cite{J-01}  $D([{\overline{F}}_{\#}] - [{\overline{P}}_{\#}])$ is defined by
$$ D([{\overline{F}}_{\#}] - [{\overline{P}}_{\#}]) = gcd\{([{\overline{F}}_{\#}] - [{\overline{P}}_{\#}])_{\alpha_i}, \,\,\,\,\,\, 1 \leq i \leq n+1 \},$$
where $([{\overline{F}}_{\#}] - [{\overline{P}}_{\#}])_{\alpha_i}$ denotes the determinant of the matrix $[{\overline{F}}_{\#}] - [{\overline{P}}_{\#}]$ with the column $\alpha_i$ removed. In our case we have;
$$[{\overline{F}}_{\#}] - [{\overline{P}}_{\#}] = 
\left( \begin{array}{ccccc}
	0 & b_{12} & \cdots & b_{1n} & c_1	\\
	0 & b_{22}-1 &  \cdots & b_{2n} & c_2  \\
	\vdots & \vdots   &    &  \vdots & \vdots \\
	0  & b_{n2}	&  \cdots &  b_{nn}-1 & c_n \\	
\end{array}\right).$$

Since the first column of the above matrix is zero then 
$$ D([{\overline{F}}_{\#}] - [{\overline{P}}_{\#}]) = |det(A)| = N(F),$$
where $A$ is as in Theorem \ref{main-theorem-1}.
In this case the affine homotopies ``realize'' the one-parameter Nielsen number.     

In which case where  $N(F|_{\mathbb{T}^{n}}) \neq 0$ the Proposition \ref{prop-nonzero} guarantees that the one-parameter Nielsen number $N(F)$ is zero. But in this case we have $D([{\overline{F}}_{\#}] - [{\overline{P}}_{\#}])  \neq 0.$ This happens because arcs connecting $\mathbb{T}^{n} \times \{0\}$ to $\mathbb{T}^{n} \times \{1\}$ in $Fix(F)$ will produce circles in $Fix(\overline{F}).$

\

{\bf II.} Let $M$ be a fiber bundle with base $S^{1}$ and fiber $\mathbb{T}^{n}.$ The total space $M$ is given by
$$M = M(h) = \displaystyle \frac{\mathbb{T}^{n} \times I}{(z,0) \sim (h(z),1)}$$ 
where $h$ is a homeomorphism of $\mathbb{T}^{n}.$ For more details in the cases $n=1$ or $n=2$ see \cite{G-87} and \cite{G-P-V-04}. The projection map $p: M(h) \to S^{1} = I/ 0 \simeq 1$ is given by $p(<z,t>) = <t>,$ where $<z,t>$ denotes the class of $(z,t)$ in $M(h).$  
Therefore each map $f: M(h)  \to M(h),$ over $S^{1},$ is given by $$f(<z,t>) = <F(z,t), t>,$$ 
where $F: \mathbb{T}^{n} \times I \to \mathbb{T}^{n}$ is a homotopy. The term over $S^{1}$ means $p \circ f = p.$ By the long exact sequence of the fibration $p$ in homotopy we obtain;
$$ \pi_1(M(h),0) \approx \pi_1(\mathbb{T}^{n}) \rtimes \pi_1(S^{1}).$$
We denote the generators of $\pi_1(M(h),0)$ by $u_1, \cdots, u_n , d.$

We are interested to use the Theorem \ref{main-theorem-1} to compute the minimal path components of $Fix(f).$ This phrases means, we want to find a map $g$ fiberwise homotopic to $f$ such that the path components in $Fix(g)$ is minimal. 

Given $f: M(h) \to M(h)$ we will assume that $N(f|_{\mathbb{T}^{n}}) = 0.$ 
Let us take $G: \mathbb{T}^{n} \times I \to \mathbb{T}^{n}$ defined by 
$$G(x_1, \cdots, x_n, t) = ( \displaystyle \sum_{j=1}^{n} b_{1j}x_j + c_1t + \epsilon_1 , \cdots , \displaystyle \sum_{j=1}^{n} b_{nj}x_j + c_nt + \epsilon_n ),$$
where $[\phi] = [(H|_{\mathbb{T}^{n}})_{\#}] = (b_{ij}) = [(f|_{\mathbb{T}^{n}})_{\#}]$ and $c_i$ are given by $[f(<v,t>)] = u_1^{c_1} \cdots u_n^{c_n} d. $  Since $N(f|_{\mathbb{T}^{n}}) = 0$ we can suppose that $[\phi]$ has the same expression as in \eqref{matrix-phi}.

We consider $\mathcal{H}$ the set of all homeomorphism $h$  of $\mathbb{T}^{n}$ such that $G(z,0) = G(h(z),1).$ In our application we consider $M(h)$ with $h \in \mathcal{H}.$ If $h \in \mathcal{H}$ then $G$ induces a fiber map $g: M(h) \to M(h)$ over $S^{1}$ defined by $g(<z,t>) = <G(z,t),t>,$ for an example of this in case $n=2$ see \cite{S-S-17}.

By construction we have $g_{\#} = f_{\#}.$ Since $M(h)$ is $K(\pi, 1)$ then $g$ is homotopic to $f$ by a homotopy over $S^{1}.$
We can choose $ 0 \leq \epsilon_i < 1$ such that $G$ has no fixed points for $t=0,1,$ which implies that $g$ also has no fixed points for $t = 0,1.$ Therefore $Fix(g) \simeq Fix(G).$

Note that $G(z,0) = G(z,1)$ in $\mathbb{T}^{n},$ where $z = (x_1, \cdots, x_n).$ In this case each homotopy of $G$ relative to $\mathbb{T}^{n} \times \{0,1\}$ is equivalent to a homotopy over $S^{1}$ of $g$ relative to $(v,[0]).$ Therefore to minimize the path components of $Fix(g)$ by homotopies over $S^{1}$ relative to $(v, [0])$ is equivalent to minimize the path components of $Fix(G)$ by homotopies relative to $\mathbb{T}^{n} \times \{0,1\}.$

The one-parameter Nielsen number $N(G)$ is a lower bound for the number of path components in $Fix(H)$ for each $H$ homotopic to $G$ relative to $\mathbb{T}^{n}\times \{0,1\}.$    
Follows from Theorem \ref{main-theorem-1} that the minimum number of path components, or the minimum number of circles, in $Fix(G)$ is given by $N(G) = |det(A)|,$ where $A$ is as in \eqref{matrix-A}, and therefore the minimum number of path components in $Fix(f)$ is $|det(A)|.$

In case $n=2$ the number $|det(A)|$ appeared in the main theorem of \cite{G-P-V-04} to say only when a fiber map $f : M(h) \to M(h)$ can be deformed or not to a fixed point free map over $S^{1}.$ Here we give a complete description because we proved that $|det(A)|$ is the minimum number of path components in $Fix(f)$ up to deformations over $S^{1}.$  Therefore $f$ can be deformed to a fixed point free map over $S^{1}$ if and only if $|det(A)| = 0$ in the case where $h \in \mathcal{H}.$

% ------------------------------------------------------------------------

%\subsection*{Acknowledgment}
%Many thanks to our \TeX-pert for developing this class file.

% ------------------------------------------------------------------------
\end{document}